\theoremstyle{plain}
    \newtheorem{theorem}{Theorem}[section]
    \newtheorem{lemma}[theorem]{Lemma}
    \newtheorem{corollary}[theorem]{Corollary}
    \newtheorem{proposition}[theorem]{Proposition}
\theoremstyle{definition}
    \newtheorem{definition}[theorem]{Definition}
    \newtheorem*{thank}{Acknowledgements}
\theoremstyle{remark}
    \newtheorem{remark}[theorem]{Remark}
\numberwithin{equation}{section}
\newcommand{\ZZ}{\mathbb{Z}}
\newcommand{\NN}{\mathcal{N}}
\newcommand{\bNconn}{\overline{\mathbb{N}}}
\newcommand{\bNconnd}{\overline{\mathbb{N}}^\vee}
\newcommand{\NNconn}{\mathfrak{N}}
\newcommand{\Conn}{\mathfrak{Conn}}
\newcommand{\complex}{\mathbb{C}}
\newcommand{\nat}{\mathbb{N}}
\newcommand{\A}{\mathcal{A}}
\newcommand{\F}{\mathcal{F}}
\newcommand{\AOO}{\A^{0,0}}
\newcommand{\AOD}{\A^{0,\bullet}}
\newcommand{\PA}{\mathcal{P}_{A}}
\newcommand{\gext}{\operatorname{Ext}}
\newcommand{\Hom}{\operatorname{Hom}}
\newcommand{\End}{\operatorname{End}}
\newcommand{\Id}{\operatorname{Id}}
\newcommand{\dbar}{\overline{\partial}}
\newcommand{\expmap}{\operatorname{exp}}
\newcommand{\normal}{N}
\newcommand{\conormal}{N^\vee}
\newcommand{\btnormal}{\overline{N}_\otimes}
\newcommand{\bcnormal}{\overline{\mathcal{N}}}
\newcommand{\dbarsharp}{\dbar^\sharp}
\newcommand{\substr}{\operatorname{Substr}}
\newcommand{\sgn}{\operatorname{sgn}}
\newcommand{\Cinf}{$C^\infty$}
\newcommand{\aaa}{\mathfrak{a}}
\newcommand{\Diagfinite}{\Delta^{\upscript{(r)}}}
\newcommand{\Xdiag}{X_{X \times X}\formal}
\newcommand{\Xdiagfinite}{X^{(r)}_{X \times X}}
\newcommand{\Shat}{\hat{S}}
\newcommand{\projtan}{\tau}
\newcommand{\liftnormal}{\rho}
\newcommand{\normalconn}{\nabla^\bot}
\newcommand{\normalconnbar}{\overline{\nabla}^\bot}
\newcommand{\Dnormal}{\mathfrak{D}}
\newcommand{\symmconn}{\overline{\nabla}}
\newcommand{\Sym}{\operatorname{Sym}}
\newcommand{\Sh}{\operatorname{Sh}}
\newcommand{\Aut}{\operatorname{Aut}}
\newcommand{\Spf}{\operatorname{Spf}}
\newcommand{\GL}{\mathbf{GL}}
\newcommand{\JetX}{\mathcal{J}^\infty_X}
\newcommand{\JetXfinite}{\mathcal{J}^r_X}
\newcommand{\graded}{\operatorname{gr}}
\newcommand{\Lie}{\mathcal{L}}
\newcommand{\Ker}{\operatorname{Ker}}
\newcommand{\upscript}[1]{{\scriptscriptstyle{#1}}}
\newcommand{\Linf}{$L_\infty$}
\newcommand{\Linfs}{$L_\infty[1]$}
\newcommand{\Yhat}{{\hat{Y}}}
\newcommand{\Yhatfinite}{{\hat{Y}^{\upscript{(r)}}}}
\newcommand{\Osheaf}{\mathscr{O}}
\newcommand{\Asheaf}{\mathscr{A}}
\newcommand{\Isheaf}{\mathscr{I}}
\newcommand{\formal}{^{\upscript{(\infty)}}}
\newcommand{\dash}{\operatorname{-}}
\newcommand{\proj}{\operatorname{pr}}
\newcommand{\Der}{\mathcal{D}er}
\newcommand{\field}{\mathbb{K}}
\newcommand{\LRs}{$LR_\infty[1]$}
\newcommand{\LD}{L^\bullet}
\newcommand{\AD}{\A^\bullet}
\newcommand{\ctensor}{\hat{\otimes}}
\newcommand{\ShatL}{\Shat_{\A}(L,\A)}
\newcommand{\shape}{S_N}
\title[Dolbeault dga and  $L_\infty$-algebroid of the formal neighborhood]
{Dolbeault dga and  $L_\infty$-algebroid of the formal neighborhood}
\author{Shilin Yu}
\address{Department of Mathematics, University of Pennsylvania, PA 19104-6395, USA
}
\email{shilinyu@math.upenn.edu}
\keywords{Formal neighborhood, Jet Bundle, Differential graded algebra, Formal geometry, Atiyah class, $L_\infty$-algebra, \Linf-algebroid}
\thanks{This research was partially supported by the grant DMS1101382  from the National Science Foundation.}
\subjclass[2010]{Primary 14B20; Secondary 16E45, 58A20}
\begin{document}

\begin{abstract}
 We continue the study the Dolbeault dga of the formal neighborhood of an arbitary closed embedding of complex manifolds previously defined by the author in \cite{DolbeaultDGA}. The special case of the diagonal embedding has been studied in \cite{Diagonal}. We describe the Dolbeault dga explicitly in terms of the formal differential geometry of the embedding. Moreover, we show that the Dolbeault dga is the completed Chevalley-Eilenberg dga an $L_\infty$-algebroid structure on the shifted normal bundle of the submanifold. This generlizes the result of Kapranov on the diagonal embedding and Atiyah class.
\end{abstract}

\maketitle

\tableofcontents

\section{Introduction}

This is the continuation of \cite{DolbeaultDGA} and \cite{Diagonal}. In \cite{DolbeaultDGA} we introduced the notion of \emph{Dolbeaut differential graded algebra (dga)} of the formal neighborhood of a closed embedding of complex manifolds, which contains all the formal geometric information of the embedding. Then in \cite{Diagonal} we studied the Dolbeault dga of the diagonal embedding and recovered Kapranov's description of the formal neighborhood of the diagonal in terms of the Atiyah class (\cite{Kapranov}). In the current paper, we will generalize the results in \cite{Diagonal} to arbitrary closed embeddings and show how to describe the Dolbeault dga explicitly in terms of the differential geometry of the submanifold, at least when the ambient manifold has a K\"ahler metric.

The Dolbeault dga $A=(\A^\bullet(\Yhat),\dbar)$ for the formal neighborhood $\Yhat$ of any closed embedding $i: X \hookrightarrow Y$ is defined in a canonical way, independent of any auxiliary geometric structures of the manifolds. A certain dg category $\PA$ of certain dg-modules over $\A$ was built in \cite{DolbeaultDGA} following the work of \cite{Block1} and was shown to be a dg-enhancement of the derived category of coherent sheaves over $\Yhat$.

However, we are interested in explicit construction of objects in $\PA$, among which the most important one for us is the derived direct image of $\Osheaf_X$ on $\Yhat$, which will be the main content of our upcoming work \cite{Todd}. There we will study the quantized analytic cycle class defined by Grivaux \cite{GrivauxHKR}, which specializes to the usual Todd class in the case of diagonal embedding. For this purpose, we need a geometric description of the Dolbeault dga, which reflects how the submanifold 'curls' in the ambient manifold.

The paper \cite{Diagonal} provides such a description in the case of the diagonal embedding $\Delta: X \hookrightarrow X \times X$. It was shown that there exist isomorphisms between the Dolbeault dga of the formal neighborhood $\Xdiag$ and the dga $(\AOD_X(\Shat(T^*X)), D_\sigma)$ of the Dolbeault resolution of the completed symmetric algebra of the cotangent bundle of $X$, where the differentials $D_\sigma$ depend on sections $\sigma$ of certian jet bundle with infinite dimensional fibers and related to the Atiyah class of $X$. In the current paper, we generalize this result to the case of a general embedding $i: X \hookrightarrow Y$, i.e., we show that there are isomorphisms between the dgas $(\A^\bullet(\Yhat),\dbar)$ and $(\AOD_X(\Shat(\conormal)), \Dnormal)$, where $\conormal$ is the conormal bundle of the submanifold and the differential $\Dnormal$ again depends on sections of some jet bundle. The main idea is to consider the graph $\widetilde{i}: X \hookrightarrow X \times Y$ of $i$, which again is an embedding, and the natural map between the pairs $(X, X \times Y) \to (X, Y)$. The formal neighborhood $\Yhat$ of $X$ inside Y can then be studied by studying the formal neighborhood $X\formal_{X \times Y}$ of $X$ inside $X \times Y$. The latter has a similar description as that of the diagonal embedding (Theorem \ref{thm:formal_graph}).

Kapranov's original result was formulated as an \Linf-algebra structure on the shifted tangent bundle $TX[-1]$, whose binary bracket is given by the Atiyah class. In our language, the completed Chevalley-Eilenberg dga of $TX[-1]$ is the Dolbeault dga $\A^\bullet(\Xdiag)$ or $(\AOD_X(\Shat(T^*X)), D_\sigma)$. In particular, it induces an Lie algebra structure on $TX[-1]$ as an object in the derived category of $X$. 
Our result on general embeddings can also be reformulated as an \Linf-structure on the shifted normal bundle $N[-1]$. The novel discovery here is an extra $\infty$-anchor map $N[-1] \to TX$ which makes $N[-1]$ into an \emph{\Linf-algebroid}. In the case of the diagonal embedding, it recovers Kapranov's \Linf-algebra, where the anchor map vanishes. To our knowledge, the notion of \Linf-algebroid was first defined in the work of Kjeseth (\cite{Kjeseth1}, \cite{Kjeseth2}) under the name of strong homotopy Lie-Rinehart algebras. \Linf-algebroids also appear in other context, such as string theory (\cite{Stasheff}) and the study of foliations (\cite{Luca}). 

We want to mention that Calaque, C\u{a}ld\u{a}raru and Tu have established similar results in the algebraic setting (\cite{CalaqueCaldararuTu}). While they built a dg-Lie algebroid on some dg-sheaf which is quasi-isomorphic to $i_*N[-1]$ in the derived category of $Y$, our \Linf-algebroid has the Dolbeault resolution of the normal bundle as the underlying complex and the higher brackets do not vanish in general.

The paper is organized as follows. In \S~\ref{sec:Dolbeault_DGA} we recall the general definition of the Dolbeault dga $\A^\bullet(\Yhat)$ for the formal neighborhood $\Yhat$ of an arbitrary closed holomorphic embedding $i: X \hookrightarrow Y$ of complex manifolds. In \S~\ref{sec:Diagonal_Formal} we briefly review our reformulation (\cite{Diagonal}) of Kapranov's result of the diagonal embedding. We recall various infinite dimensional fiber bundles arising from formal geometry, which we already used heavily in \cite{Diagonal} to derive Kapranov's results. We then apply them in \ref{sec:General_Case} to get a description of the Dolbeault dga of an arbitrary embedding $i: X \hookrightarrow Y$, in which many other differential geometric quantities other than the curvature, such as Kodaira-Spencer class and shape operator, come into the picture. The main result is Theorem \ref{thm:MAIN_derivation}. For convenience we only discuss the K\"ahler case, yet the formulas make sense in broader context (see the comment at the beginning of \S~\ref{subsubsec:pi}) . We construct an isomorphism from our canonical yet abstractly defined Dolbeault dga $\A^\bullet(\Yhat)$ to a concrete dga, namely the completed symmetric algebra $\AOD(\Shat(\conormal))$ of the conormal bundle of $X$ in $Y$, and compute the differential on it. The main result is Theorem \ref{thm:MAIN_derivation} for the final answer. Finally, \S~\ref{sec:sh-LieAlgebroid} is contributed to the equivalent \Linf-description. We will recall the basic definitions of \Linf-algebroids from \cite{Luca}. For convenience, we will mainly use \Linfs-algebroid, which is a shifted version of \Linf-algebroid, since the signs involved in the formula are enormously simplified.

\begin{thank}
  The author would like to thank Jonathan Block, Damien Calaque, Andrei C\u{a}ld\u{a}raru, Nigel Higson and Junwu Tu for many discussions.This research was partially supported under NSF grant DMS-1101382.
\end{thank}

\section{Dolbeault dga of formal neighborhoods}\label{sec:Dolbeault_DGA}

\subsection{Definitions and notations}\label{subsec:defn}

We review the notations and definitions from \cite{DolbeaultDGA}. Let $i: (X, \Osheaf_X) \hookrightarrow (Y, \Osheaf_Y)$ be a closed embedding of complex manifolds where $\Osheaf_X$ and $\Osheaf_Y$ are the structure sheaf of germs of holomorphic functions over the complex manifolds $X$ and $Y$ respectively. Let $\Isheaf$ the ideal sheaf of $\Osheaf_Y$ of holomorphic functions vanishing along $X$. The main objects studied by this paper are \emph{the $r$-th formal neighborhood $\Yhatfinite$} of $X$ in $Y$, which is defined as the ringed space $(X, \Osheaf_{\Yhatfinite})$ whose the structure sheaf is
\begin{displaymath}
  \Osheaf_{\Yhatfinite} = \Osheaf_Y / \Isheaf^{r+1},
\end{displaymath}
and \emph{the (complete) formal neighborhood $\Yhat = \Yhat^{\upscript{(\infty)}}$}, which is defined to be the ringed space $(X, \Osheaf_{\Yhat})$ where
\begin{displaymath}
 \Osheaf_{\Yhat} = \varprojlim_{r} \Osheaf_{\Yhatfinite} = \varprojlim_{r} \Osheaf_X / \Isheaf^{r+1}.
\end{displaymath}
We will also use the notations $X^{\upscript{(\infty)}}_Y = \Yhat$ and $X^{\upscript{(r)}}_Y=\Yhatfinite$ when we need to emphasize the submanifolds.

In  \cite{DolbeaultDGA} \emph{the Dolbeault differential graded algebra (dga)} of the embedding $i: X \hookrightarrow Y$ is defined as follows. Let $(\AOD(Y), \dbar) = (\wedge^\bullet \Omega_Y^{0,1}, \dbar)$ be the Dolbeault complex of $Y$, thought of as a dga. For each nonnegative integer $r$, $\aaa^\bullet_r$ is set to be the graded ideal of $\AOD(Y)$ consisting of those forms $\omega \in \AOD(Y)$ satisfying
\begin{equation}\label{defn:aaa_finite}
  i^*(\Lie_{V_1} \Lie_{V_2} \cdots \Lie_{V_l} \omega) = 0, \quad \forall~ 1 \leq j \leq l,
\end{equation}
for any collection of smooth $(1,0)$-vector fields $V_1, V_2, \ldots, V_l$ over $Y$, where $0 \leq l \leq r$. By Proposition 2.1, \cite{DolbeaultDGA}, $\aaa^\bullet_r$ is invariant under the action of $\dbar$ and hence is a dg-ideal of $(\AOD(Y),\dbar)$.

\begin{definition}[Definition 2.3, \cite{DolbeaultDGA}]
  The \emph{Dolbeault dga of the $r$-th formal neighborhood $\Yhatfinite$} is the quotient dga
  \begin{displaymath}
    \A^\bullet(\Yhatfinite) := \AOD(Y) / \aaa^\bullet_r.
  \end{displaymath}
  The \emph{Dolbeault dga of the complete formal neighorhood $\Yhat$} is defined to be the inverse limit
  \begin{displaymath}
    \A^\bullet(\Yhat) = \A^\bullet(\Yhat^{\upscript{(\infty)}}):= \varprojlim_r \A^\bullet(\Yhatfinite).
  \end{displaymath}
  We will write $\A(\Yhat) = \A^0(\Yhat)$ and $\A(\Yhatfinite) = \A^0(\Yhatfinite)$ for the zeroth components of the Dolbeault dgas.
\end{definition}

The Dolbeault dga $\A^\bullet(\Yhatfinite)$ can be sheafified to a soft sheaf of dgas $\Asheaf^\bullet(\Yhatfinite)$ over $X$ for $r \in \nat$ or $r=\infty$ (see \cite{DolbeaultDGA} for details). Moreover, there are natural inclusions of sheaves of algebras $\Osheaf_{\Yhatfinite} \hookrightarrow \Asheaf(\Yhatfinite)$ The following result was proved in \cite{DolbeaultDGA}.

\begin{theorem}[Prop. 2.8., \cite{DolbeaultDGA}]
  For any nonnegative integer $r$ or $r = \infty$, the complex of sheaves
  \begin{displaymath}
    0 \to \Osheaf_{\Yhatfinite} \to \Asheaf^0_{\Yhatfinite} \xrightarrow{\dbar} \Asheaf^1_{\Yhatfinite} \xrightarrow{\dbar} \cdots \xrightarrow{\dbar} \Asheaf^m_{\Yhatfinite} \to 0
  \end{displaymath}
  is exact, where $m = \dim X$. In other words, $(\Asheaf^\bullet_{\Yhatfinite},\dbar)$ is a soft resolution of $\Osheaf_{\Yhatfinite}$.
\end{theorem}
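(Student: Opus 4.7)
The plan is to reduce the statement to the Dolbeault resolution of a holomorphic vector bundle on $X$. The key observation is that, as a sheaf on $X$, $\Osheaf_{\Yhatfinite}$ is the sheaf of holomorphic sections of a holomorphic vector bundle over $X$ of rank $\binom{n-m+r}{r}$ (where $n = \dim Y$ and $m = \dim X$): in local holomorphic coordinates $(z_1, \dots, z_m, w_1, \dots, w_{n-m})$ on $Y$ with $X = \{w = 0\}$, one has $\Osheaf_{\Yhatfinite} \cong \Osheaf_X \otimes \complex[w_1,\dots,w_{n-m}]/(w)^{r+1}$, and transition functions between local trivializations are holomorphic; the associated graded of the $\Isheaf$-adic filtration is $\bigoplus_{k=0}^r \Sym^k \conormal$.

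I would then exhibit a local decomposition $\Asheaf^q_{\Yhatfinite} \cong \bigoplus_{|\alpha| \leq r} \Asheaf^q_X \cdot w^\alpha$, realized by representing every class in $\Asheaf^q_{\Yhatfinite}$ by its normal holomorphic Taylor jet of order $r$ along $X$. Concretely, given $\omega = \sum_I g_I\, d\bar{z}^I \in \Asheaf^q_Y$, the terms with $I$ containing a normal index already lie in $\aaa^\bullet_r$ (since $i^* d\bar{w}_j = 0$ and $\Lie_V d\bar{w}_j = 0$ for every $(1,0)$-vector field $V$), so one may assume $I \subset \{1,\dots,m\}$. The class of $\omega$ modulo $\aaa^q_r$ is then represented canonically by
\[
\sum_I \sum_{|\alpha| \leq r} \frac{1}{\alpha!}\, (\partial^\alpha_w g_I)\big|_X \cdot w^\alpha \cdot d\bar{z}^I,
\]
where $\alpha$ ranges over normal multi-indices. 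The main technical step is to verify this: one uses that Lie derivatives along $(1,0)$-vector fields act on smooth coefficients as iterated holomorphic derivatives, and that tangential holomorphic derivatives of the $g_I$ are automatically controlled by the normal ones once the restrictions $(\partial^\alpha_w g_I)|_X$ are prescribed, so that the defining vanishing condition for $\aaa^\bullet_r$ reduces exactly to the vanishing of all such normal-jet coefficients. Because each $w^\alpha$ is holomorphic, $\dbar$ preserves the decomposition and acts only on the $\Asheaf^q_X$ factor.

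Granted this, $(\Asheaf^\bullet_{\Yhatfinite}, \dbar)$ is locally a finite direct sum of copies of the standard Dolbeault complex on $X$, so the Dolbeault lemma on $X$ gives exactness in positive degrees; in degree zero, the kernel is $\bigoplus_{|\alpha|\leq r} \Osheaf_X \cdot w^\alpha \cong \Osheaf_{\Yhatfinite}$. Since exactness is local, this settles the finite $r$ case. Softness of $\Asheaf^q_{\Yhatfinite}$ follows because it is a module over the sheaf $\Asheaf^0_X$ of smooth functions on $X$, which admits partitions of unity. For $r = \infty$, the transition maps $\Asheaf^q_{\Yhat^{(r+1)}} \to \Asheaf^q_{\Yhatfinite}$ are surjective, so the Mittag-Leffler condition holds, $\varprojlim_r$ commutes with cohomology, and the inverse limit is a soft resolution of $\Osheaf_{\Yhat}$.
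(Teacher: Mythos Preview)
The paper does not prove this statement here; it is quoted verbatim as Prop.~2.8 of \cite{DolbeaultDGA} and no argument is given in the present text. So there is nothing to compare your proposal against within this paper.

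On its own merits your argument is essentially correct, with one caveat. The opening sentence, that $\Osheaf_{\Yhatfinite}$ is ``the sheaf of holomorphic sections of a holomorphic vector bundle over $X$'', is not accurate as stated: there is no canonical $\Osheaf_X$-module structure on $\Osheaf_{\Yhatfinite}$ (only the quotient map $\Osheaf_{\Yhatfinite}\to\Osheaf_X$, not a section of it), so speaking of it as a locally free $\Osheaf_X$-module is ill-posed globally. Fortunately your actual proof never uses this global assertion; everything is carried out locally in adapted coordinates, and there the decomposition $\Asheaf^q_{\Yhatfinite}\cong\bigoplus_{|\alpha|\le r}\Asheaf^q_X\cdot w^\alpha$ is available and $\dbar$-compatible because the $w^\alpha$ are holomorphic. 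Your check that any $d\bar w_j$ factor forces membership in $\aaa^\bullet_r$ is correct (since $\Lie_V d\bar w_j=0$ for $(1,0)$-fields $V$), and the reduction of the condition $i^*\Lie_{V_1}\cdots\Lie_{V_l}\omega=0$ to vanishing of the normal jets $(\partial_w^\alpha g_I)|_X$ is valid: tangential $\partial_{z_i}$-derivatives of a function that vanishes on $X$ again vanish on $X$, so mixed derivatives are automatically controlled. The appeal to the Dolbeault lemma on $X$, softness via the $\Asheaf^0_X$-module structure, and the Mittag--Leffler passage to $r=\infty$ are all standard and correct. Just drop or rephrase the vector-bundle framing in the first sentence.
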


As the completion of $\AOD(Y)$ with respect to the filtration $\aaa^\bullet_r$, the dga $\A^\bullet(\Yhat)$ is itself filtered and its associated graded dga is
\begin{displaymath}
  \graded \A^\bullet(\Yhat) \simeq (\AOD(Y) / \aaa^\bullet_0) \oplus \bigoplus_{r=0}^{\infty} \aaa^\bullet_r / \aaa^\bullet_{r+1}.
\end{displaymath}
Note that $\AOD(X) \simeq \AOD(Y) / \aaa^\bullet_0$ and $\aaa^\bullet_r / \aaa^\bullet_{r+1}$ are dg-modules over $(\AOD(X),\dbar)$. We define, for each $r \geq 0$, a `cosymbol map' of complexes
\begin{equation}\label{eq:isom_assoc_graded}
  \tau_r : (\aaa^\bullet_r / \aaa^\bullet_{r+1}) \to \AOD_X(S^{r+1} \conormal),
\end{equation}
where $S^{r+1} \conormal$ is the $(r+1)$-fold symmetric tensor of the conormal bundle $\conormal$ of the embedding. Given any $(r+1)$-tuple of smooth sections $\mu_1, \ldots, \mu_{r+1}$ of $\normal$, we lift them to smooth sections of $TY|_X$ and extend to smooth $(1,0)$-tangent vector fields $\tilde{\mu}_1, \ldots, \tilde{\mu}_{r+1}$ on $Y$ (defined near $X$). We then define the image of $\omega + \aaa^\bullet_{r+1} \in \aaa^\bullet_r / \aaa^\bullet_{r+1}$ under $\tau$ for any $\omega \in \aaa^\bullet_{r}$, thought of as linear functionals on $(\normal)^{\otimes (r+1)}$, by the formula
\begin{equation}
  \tau_r (\omega + \aaa^\bullet_{r+1}) (\mu_1 \otimes \cdots \otimes \mu_{r+1}) = i^* \Lie_{\tilde{\mu}_1} \Lie_{\tilde{\mu}_2} \cdots \Lie_{\tilde{\mu}_{r+1}} \omega.
\end{equation}
The map $\tau_r$ is well-defined and is independent of the choice of the representative $\omega$ and $\tilde{\mu}_j$'s. Moreover, the tensor part of $\tau_r (\omega + \aaa^\bullet_{r+1})$ is indeed symmetric.

\begin{proposition}
  The map $\tau_r$ in \eqref{eq:isom_assoc_graded} is an isomorphism of dg-modules over $(\AOD(X),\dbar)$.
\end{proposition}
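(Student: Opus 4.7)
My plan is to check bijectivity, $\AOD(X)$-linearity, and $\dbar$-compatibility of $\tau_r$ separately, after first reducing to a local computation. Both source and target sheafify to soft $\Asheaf^{0,\bullet}_X$-modules, and $\tau_r$ is pointwise, so it suffices to work on a chart and globalize with a partition of unity.

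Fix holomorphic coordinates $(z_1,\ldots,z_m,w_1,\ldots,w_{n-m})$ on $Y$ around a point of $X$ with $X = \{w = 0\}$. The key computational observation is that for any $(1,0)$-field $V$ one has $\Lie_V d\bar z_i = 0$ and $\Lie_V d\bar w_\alpha = 0$, since $\iota_V$ annihilates anti-holomorphic $1$-forms and these forms are closed. Consequently $\Lie_V$ acts on a form written in the basis $\{d\bar z_I \wedge d\bar w_K\}$ only through its action on coefficients, namely as the first-order operator $V$. Combined with the fact that any summand carrying a $d\bar w_\alpha$ lies in $\bigcap_s \aaa^\bullet_s$ (because $i^*d\bar w_\alpha = 0$), this lets me show that every class in $\aaa^\bullet_r / \aaa^\bullet_{r+1}$ has a unique representative
\begin{equation*}
  \omega \;=\; \sum_{|I|=r+1,\,J} \tfrac{1}{I!}\, h_{I,J}(z,\bar z)\, w^I\, d\bar z_J,
\end{equation*}
whose image under $\tau_r$ is $\sum_{I,J} h_{I,J}\,(dw)^I \otimes (d\bar z_J)|_X$. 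Bijectivity is then immediate: injectivity from the fact that vanishing of all Taylor coefficients forces $\omega \in \aaa^\bullet_{r+1}$, and surjectivity by lifting any local expression of an element of $\AOD_X(S^{r+1}\conormal)$ to such a Taylor polynomial.

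The compatibilities hinge on the ideal identity $\aaa^\bullet_0 \cdot \aaa^\bullet_r \subset \aaa^\bullet_{r+1}$, which I verify by a Leibniz expansion: in $i^*\Lie_{V_1}\cdots\Lie_{V_{r+1}}(\beta \wedge \omega)$ with $\beta \in \aaa^\bullet_0$ and $\omega \in \aaa^\bullet_r$, every summand $(i^*\Lie_{V_S}\beta)\wedge(i^*\Lie_{V_{S^c}}\omega)$ vanishes --- either $|S^c|\leq r$ and $\omega \in \aaa^\bullet_r$ kills the right factor, or $S=\emptyset$ and $\beta \in \aaa^\bullet_0$ kills the left one. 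The same expansion, with $\beta$ replaced by any lift $\tilde{\bar\alpha}$ of $\bar\alpha \in \AOD(X)$, collapses to the single surviving term $\bar\alpha \wedge \tau_r(\omega)$, giving both $\AOD(X)$-linearity and independence of the lift. For $\dbar$-compatibility, since $\tau_r$ does not depend on the chosen lift $\tilde\mu_j$, I may locally take $\tilde\mu_j = \partial/\partial w_{\alpha_j}$; these holomorphic fields satisfy $[\dbar,\Lie_{\tilde\mu_j}]=0$, and together with $i^*\dbar = \dbar\, i^*$ this yields $\tau_r(\dbar\omega) = \dbar\,\tau_r(\omega)$ on the chart, patched globally via the partition of unity.

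The main obstacle will be bookkeeping rather than real mathematics: one must consistently track that the various ambiguities (lift of $\omega$ modulo $\aaa^\bullet_{r+1}$, lift of $\mu_j$ modulo vectors tangent to $X$, lift of $\tilde{\bar\alpha}$) are absorbed by the filtration, so that in every Leibniz-type expansion only the term applying all $r+1$ Lie derivatives to the factor lying in $\aaa^\bullet_r$ survives after pullback to $X$. Once the filtration identity $\aaa^\bullet_0 \cdot \aaa^\bullet_r \subset \aaa^\bullet_{r+1}$ is in hand, each subsequent verification becomes a one-line consequence of this collapse.
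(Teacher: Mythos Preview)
The paper states this proposition without supplying a proof, so there is no argument to compare against. Your proof is correct and is the natural direct verification: the local reduction via adapted coordinates $(z,w)$ with $X=\{w=0\}$, the observation that $\Lie_V d\bar z_i=\Lie_V d\bar w_\alpha=0$ for $(1,0)$-fields $V$, the identification of a normal-form representative $\sum_{|I|=r+1} h_{I,J}(z,\bar z)\,w^I d\bar z_J$ for each class, and the Leibniz collapse that gives both $\aaa^\bullet_0\cdot\aaa^\bullet_r\subset\aaa^\bullet_{r+1}$ and $\AOD(X)$-linearity all go through as written. The $\dbar$-compatibility step is also fine: for a holomorphic $(1,0)$-field $V$ one has $[\Lie_V,d]=0$ and $\Lie_V$ preserves bidegree, so $[\Lie_V,\dbar]=0$, and together with $i^*\dbar=\dbar\,i^*$ this yields the claim.
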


\begin{corollary}
  We have a natural isomorphism of dgas
  \begin{displaymath}
    \graded \A^\bullet(\Yhat) \simeq \bigoplus_{n=0}^{\infty} \AOD_X(S^n \conormal).
  \end{displaymath}
\end{corollary}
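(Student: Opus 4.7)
The plan is to assemble the isomorphisms $\tau_r$ of the preceding proposition, together with the identification $\AOD(Y)/\aaa^\bullet_0 \simeq \AOD(X) = \AOD_X(S^0 \conormal)$ (which is immediate from \eqref{defn:aaa_finite} with $l = 0$), into a single map
$$\tau : \graded \A^\bullet(\Yhat) \longrightarrow \bigoplus_{n=0}^{\infty} \AOD_X(S^n \conormal).$$
Summand by summand, $\tau$ is an isomorphism of complexes of $\AOD(X)$-modules by the proposition, so the only remaining content is to verify that $\tau$ intertwines the two algebra structures.

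First I would check that the wedge product respects the filtration in the correct way, namely $\aaa^\bullet_r \wedge \aaa^\bullet_s \subseteq \aaa^\bullet_{r+s+1}$, so that the subquotients assemble into a graded algebra matching the grading on the symmetric side. Given $\omega \in \aaa^\bullet_r$, $\omega' \in \aaa^\bullet_s$ and smooth $(1,0)$-vector fields $V_1, \ldots, V_l$ with $l \leq r+s+1$, the Leibniz rule for Lie derivatives expands $\Lie_{V_1}\cdots\Lie_{V_l}(\omega\wedge\omega')$ as a sum over partitions $\{1,\ldots,l\} = I \sqcup J$ of the products $(\Lie_{V_I}\omega)\wedge(\Lie_{V_J}\omega')$. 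If $|I| \leq r$ then $i^*(\Lie_{V_I}\omega) = 0$ by definition of $\aaa^\bullet_r$; otherwise $|I| \geq r+1$ forces $|J| \leq s$, hence $i^*(\Lie_{V_J}\omega') = 0$. Either way every term pulls back to zero on $X$.

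Next I would compare cosymbols on a product. Evaluating $\tau_{r+s+1}([\omega\wedge\omega'])$ on a tensor $\mu_1\otimes\cdots\otimes\mu_{r+s+2}$ and running the same Leibniz expansion, only the terms with $|I|=r+1$ and $|J|=s+1$ survive, and the resulting sum
$$\sum_{\substack{I\sqcup J = \{1,\ldots,r+s+2\} \\ |I|=r+1}} i^*\bigl(\Lie_{\tilde{\mu}_I}\omega\bigr)\cdot i^*\bigl(\Lie_{\tilde{\mu}_J}\omega'\bigr)$$
is by inspection the value of the symmetric product $\tau_r([\omega])\cdot\tau_s([\omega'])$ on the same argument, once one uses that $i^*$ of a wedge product of $(0,\bullet)$-forms is the pointwise product of restrictions. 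Combined with the degree-zero identification, this proves $\tau$ is an algebra isomorphism. The only step with genuine combinatorial content is the Leibniz accounting, which is controlled because the constraints $|I|+|J|=r+s+2$, $|I|\geq r+1$, $|J|\geq s+1$ force the split uniquely up to the choice of the subset $I$; I do not anticipate any deeper obstacle.
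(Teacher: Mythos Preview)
Your argument is correct and is precisely the natural way to deduce the corollary from the preceding proposition; the paper itself offers no proof beyond stating it as an immediate consequence. The Leibniz bookkeeping you outline (that $\aaa^\bullet_r \wedge \aaa^\bullet_s \subseteq \aaa^\bullet_{r+s+1}$ and that only the partitions with $|I|=r+1$, $|J|=s+1$ survive in the cosymbol of a product) is exactly what is needed to upgrade the summand-by-summand module isomorphisms $\tau_r$ to an algebra isomorphism.
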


\section{Dolbeault dgas of diagonal embeddings}\label{sec:Diagonal_Formal}

Among all important and interesting examples is the diagonal embedding $\Delta: X \hookrightarrow X \times X$ of a complex manifold $X$ into product of two copies of itself. We then have the formal neighborhood of the diagonal, $\Xdiag$, which is understood as the dga $(\AOD(\Xdiag),\dbar)$ constructed in the previous section. It is isomorphic to the Dolbeault resolution of the infinite holomorphic jet bundle $\JetX$. In this case one can write down the $\dbar$-derivation explicitly (at least when $X$ is K\"ahler) due to a theorem below by Kapranov (\cite{Kapranov}), of which we will reproduce the K\"ahler case in a slightly different way and discuss the general situation later.

Intuitively one would expect that there is an isomorphism
\begin{equation}\label{eq:formal_isom}
  \AOD(\Xdiag) \simeq \AOD(\Shat^\bullet_X(T^*X))
\end{equation}
by taking 'Taylor expansions', where $\Shat^\bullet(T^*X)$ is the bundle of complete symmetric tensor algebra generated by the cotangent bundle of $X$ (which is natural identified with the conormal bundle of the diagonal embedding). Such an isomorphism does exist, but there is no canonical way to define it since one need to first choose some local coordinates to get Taylor expansions. Indeed we will see that the isomorphism depends (in a more or less 1-1 manner) on a smooth choice of formal (holomorphic) coordinates on $X$.

\subsection{Diagonal embeddings and jet bundles}\label{subsec:diag_jet}

We consider the case of diagonal embeddings. Let $X$ be a complex manifold and let $\Delta: X \hookrightarrow X \times X$ be the diagonal map. For convenience, we write $\Diagfinite = \Xdiagfinite$ for $r \in \nat$ or $r=\infty$ throughout this section.Denote by $\proj_1, \proj_2 : X \times X \to X$ the projections onto the first and second component of $X \times X$ respectively. The jet bundle $\JetXfinite$ of order $r$ ($r \in \nat$ or $r=\infty$) can be viewed as the sheaf of algebras
\begin{displaymath}
  \JetXfinite = \proj_{1*} \Osheaf_{\Diagfinite},
\end{displaymath}
which is a sheaf of $\Osheaf_X$-modules where the $\Osheaf_X$-actions are induced from the projection $\proj_1$. The sheaf $(\Asheaf^{0,\bullet}(\JetXfinite), \dbar)$ of Dolbeault complexes of $\JetXfinite$ is a sheaf of dgas and its global sections forms a dga
\begin{displaymath}
  \AOD(\JetXfinite) = \Gamma(X, \Asheaf^{0,\bullet}(\JetXfinite)).
\end{displaymath}
Since $\JetXfinite$ is a sheaf of $\Osheaf_X$-modules, $\AOD(\JetXfinite)$ is a dga over the dga $(\AOD(X),\dbar)$.

The Dolbeault dga $(\A^\bullet(\Diagfinite),\dbar)$ of the formal neighborhood $(\A^\bullet(\Diagfinite),\dbar)$ ($r \in \nat$ or $r=\infty$) is also an $(\AOD(X),\dbar)$-dga via the compositions of homomorphisms of dgas
\begin{displaymath}
  \AOD(X) \xrightarrow{\proj_1^*} \AOD(X \times X) \to \A^\bullet(\Diagfinite).
\end{displaymath}

\begin{proposition}[Prop. 2.8., \cite{Diagonal}]\label{prop:isom_diag_dga}
  The natural inclusion $\Osheaf_{\Diagfinite} \hookrightarrow \Asheaf_{\Diagfinite}$ determines an $(\AOD(X),\dbar)$-linear morphism of dgas
  \begin{displaymath}
    I_r: \AOD(\JetXfinite) \xrightarrow{\simeq} \A^\bullet(\Diagfinite)
  \end{displaymath}
  either when $r \in \nat$ or $r=\infty$. Similar results hold for the corresponding sheaves.
\end{proposition}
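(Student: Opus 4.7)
The plan is to construct $I_r$ canonically from the inclusion of structure sheaves, verify it is a morphism of $(\AOD(X),\dbar)$-dgas, and then check it is an isomorphism by passing to associated graded algebras.

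First I would construct the map. The natural inclusion $\Osheaf_{\Diagfinite} \hookrightarrow \Asheaf^0_{\Diagfinite}$ (holomorphic sections into smooth $(0,0)$ Dolbeault sections on the formal neighborhood) is a map of sheaves of $\Osheaf_X$-algebras, where the $\Osheaf_X$-module structure on the target is induced via $\proj_1^*$. Pushing forward by $\proj_1$ and using that $\proj_1|_\Delta = \Id$, this gives a map $\JetXfinite \hookrightarrow \Asheaf^0_{\Diagfinite}$ on $X$. Because $\Asheaf^\bullet_{\Diagfinite}$ is naturally a module over $\Asheaf^{0,\bullet}_X$ through the composite $\Asheaf^{0,\bullet}_X \xrightarrow{\proj_1^*} \Asheaf^{0,\bullet}_{X\times X} \twoheadrightarrow \Asheaf^\bullet_{\Diagfinite}$, this inclusion extends uniquely to an $\Asheaf^{0,\bullet}_X$-linear sheaf homomorphism $\Asheaf^{0,\bullet}_X \otimes_{\Osheaf_X} \JetXfinite \to \Asheaf^\bullet_{\Diagfinite}$, whose global sections provide $I_r$.

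Next I would verify that $I_r$ is compatible with the dga structures. Multiplicativity reduces to the fact that the inclusion $\Osheaf_{\Diagfinite} \hookrightarrow \Asheaf^0_{\Diagfinite}$ is an algebra map and the quotient $\AOD(X\times X) \twoheadrightarrow \A^\bullet(\Diagfinite)$ is one as well. Compatibility with $\dbar$ follows because on a local section of the form $\omega \otimes f$, with $f \in \Osheaf_{\Diagfinite}$ holomorphic, one has $\dbar(\proj_1^*\omega \cdot f) = \proj_1^*(\dbar \omega)\cdot f$, matching the Dolbeault differential on $\Asheaf^{0,\bullet}_X \otimes_{\Osheaf_X}\JetXfinite$.

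To show $I_r$ is a bijection I would compare associated gradeds. Filter $\A^\bullet(\Diagfinite)$ by the images of the ideals $\aaa^\bullet_k$, and filter $\AOD(\JetXfinite)$ by the Dolbeault complexes of the order-$k$ jet sub-quotients. The map $I_r$ is manifestly filtration-preserving, since locally in coordinates with $\xi_i=w_i-z_i$ a section of $\Osheaf_{\Diagfinite}$ is a polynomial/series $\sum f_\alpha(z)\xi^\alpha$, and the Lie-derivative condition defining $\aaa^\bullet_k$ precisely corresponds to vanishing of terms with $|\alpha|\le k$. On associated gradeds, the cosymbol isomorphism \eqref{eq:isom_assoc_graded} gives $\graded \A^\bullet(\Diagfinite) \simeq \bigoplus_{n\leq r}\AOD_X(S^nT^*X)$, while the standard symbol sequence for jets gives $\graded\JetXfinite \simeq \bigoplus_{n\leq r} S^nT^*X$, and the induced map is exactly the identification of these two via the Taylor coefficients in $\xi$. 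Thus $\graded I_r$ is an isomorphism. For finite $r$ this is a finite filtration, so $I_r$ itself is an isomorphism; the case $r=\infty$ follows by passing to inverse limits on both sides and using that $\A^\bullet(\Yhat) = \varprojlim_r \A^\bullet(\Yhatfinite)$ and $\JetX = \varprojlim_r \JetXfinite$.

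The only real obstacle I anticipate is bookkeeping at the associated-graded step: one must verify that the map induced by $I_r$ on $\graded^r$ agrees, under the cosymbol isomorphism $\tau_r$ and the jet symbol map, with the tautological identification of $S^{r+1}T^*X$ with itself. This is a local computation in coordinates $(z,\xi)$, comparing the definition $\tau_r(\omega)(\mu_1\otimes\cdots\otimes\mu_{r+1}) = i^*\Lie_{\tilde\mu_1}\cdots \Lie_{\tilde\mu_{r+1}}\omega$ to the action of constant-coefficient vector fields $\partial_{\xi}$ on monomials $\xi^\alpha$; standard.
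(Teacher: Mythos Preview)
The paper does not actually prove this proposition here; it is quoted verbatim as Prop.~2.8 of \cite{Diagonal}, with no argument reproduced. So there is no in-paper proof to compare against directly.

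That said, your proposal is correct and is precisely the kind of argument one expects in the cited reference: extend the inclusion $\Osheaf_{\Diagfinite}\hookrightarrow\Asheaf^0_{\Diagfinite}$ by $\Asheaf^{0,\bullet}_X$-linearity, note that $\dbar$ kills holomorphic sections so the extension is a dga map, and then check bijectivity via the associated graded using the cosymbol isomorphism~\eqref{eq:isom_assoc_graded} together with the jet symbol sequence. The local coordinate verification you flag at the end (matching $\tau_r$ against Taylor coefficients in the fibre variables $\xi_i=w_i-z_i$) is indeed routine. Your handling of $r=\infty$ by inverse limits is also the standard move and is consistent with how the paper sets up both $\A^\bullet(\Yhat)$ and $\JetX$ as inverse limits.
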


\subsection{Formal geometry}

\subsubsection{Differential geometry on formal discs}
\label{subsec:Formal_Discs}

Fix a complex vector space $V$ of dimension $n$. The formal disc $\widehat{V}$ is the formal neighborhood of $0$ in $V$. Its function algebra is the formal power series algebra
\begin{displaymath}
  \F = \complex \llbracket V^* \rrbracket = \Shat(V^*) = \prod_{i \geq 0} S^i V^*.
\end{displaymath}
It is a complete regular local algebra with a unique maximal ideal $\mathfrak{m}$ consisting of formal power series with vanishing constant term. The associated graded algebra with respect to the usual $\mathfrak{m}$-filtration is the symmetric algebra
\begin{displaymath}
  \graded \F = S(V^*) = \bigoplus_{i \geq 0} S^i V^*.
\end{displaymath}
Since we are in the complex analytic situation, we endow $\F$ with the canonical Fr\'echet topology . In algebraic setting, one need to use the $\mathfrak{m}$-adic topology on $\F$. However, the associated groups and spaces in question remain the same, though the topologies on them will be different. Since our arguments work for both Fr\'echet and $\mathfrak{m}$-adic settings, the topology will not be mentioned explicitly unless necessary. We also use $\widehat{V} = \Spf \F$ to denote the formal polydisc, either as a formal analytic space or a formal scheme.

We recall several definitions in \S 4, \cite{Diagonal}:
\begin{equation}\label{eq:progroup}
  \begin{split}
    G\formal(V) 
      &= \text{the proalgebraic group of automorphisms of the formal space } \widehat{V}, \\
    J\formal(V) 
      &= \Ker (d_0: G\formal \to \GL_n(V)), \text{ where } d_0(\phi) \text{ is the tangent map of } \phi \in G\formal \text{ at } 0,  \\
    \mathbf{g}\formal(V) 
      &= \text{Lie algebra of } G\formal(V)  \\
      &= \text{Lie algebra of formal vector fields on } \widehat{V} \text{ vanishing at } 0  \\
      &= \prod_{i \geq 1} V \otimes S^i V^* \\
      &= \prod_{i \geq 1} \Hom (V^*, S^i V^*),  \\
    \mathbf{j}\formal(V) 
      &= \text{Lie algebra of } J\formal(V)  \\
      &= \text{Formal vector fields on } \widehat{V} \text{ with vanishing constant and linear terms} \\
      &= \prod_{i \geq 2} V \otimes S^i V^* \\
      &= \prod_{i \geq 2} \Hom (V^*, S^i V^*),   
  \end{split}
\end{equation}
$\mathbf{g}\formal(V)$ and $\mathbf{j}\formal(V)$ act on $\F = \prod_{i \geq 0} S^i V^*$ as derivations in the obvious way. For convenience we will write $G\formal = G\formal(V)$ and so on.

The short exact sequence
\begin{equation}\label{exsq:progroups}
  1 \to J\formal \to G\formal \to \GL_n(V) \to 1
\end{equation}
splits canonically by identifying elements of $\GL_n = \GL_n(V)$ as jets of linear transformations on $\widehat{V}$. Hence $G\formal$ is the semidirect product $G\formal = J\formal \rtimes \GL_n$. The canonical bijection of sets
\begin{equation}\label{eq:J=G/GL}
  q: J\formal \xrightarrow{\simeq} G\formal / \GL_n
\end{equation}
is $G\formal$-equivariant if we endow $G\formal / \GL_n$ with the usual left $G\formal$-action and the left $G\formal$-action on $J\formal$ is given by
\begin{equation}\label{eq:actionJ_2}
  \psi \cdot \varphi = \psi \circ \varphi \circ (d_0 \psi)^{-1}, \quad \forall ~ \psi \in G\formal, ~ \forall ~ \varphi \in J\formal.
\end{equation}
In view of \eqref{eq:actionJ_2}, it is natural to interpret $J\formal$ as the set of \emph{formal exponential map} $\varphi: \widehat{T_0 V} \xrightarrow{\simeq} \widehat{V}$, where $\widehat{T_0 V}$ is the completion of the tangent space $T_0 V$ at the origin, which is canonically identified with $\widehat{V}$ itself. Such $\varphi$ induces the identity map on the tangent spaces of the two formal spaces at the origins. Moreover, giving a formal exponential map $\varphi$ is equivalent to giving an isomorphism between filtered algebras $\varphi^*: \F \to \F_T$, where
  \[  \F_T := \Shat ((T_0 V)^*) = \Shat (V^*) \]
is the algebra of functions on $\widehat{T_0 V}$, such that the induced isomorphism between the associated graded algebras, which are both $S(V^*)$, is the identity map.

In \cite{Diagonal} we introduced the set $\Conn$ of all flat torsion-free connections on $\widehat{V}$, i.e., each element of $\Conn$ is a (nonlinear) map
\begin{displaymath}
  \nabla : T\widehat{V} \to T\widehat{V} \otimes_{\Osheaf_{\widehat{V}}} T^*\widehat{V}
\end{displaymath}
satisfying the Leibniz rule and the flatness and torsion-freeness conditions. By abuse of notation, we also use $\nabla$ to denote the induced connection on the cotangent bundle of $\widehat{V}$ and its associated tensor bundles:
\begin{displaymath}
  \nabla : T^*\widehat{V} \to T^*\widehat{V} \otimes_{\Osheaf_{\widehat{V}}} T^*\widehat{V}.
\end{displaymath}
There is a canonical bijection
\begin{equation}\label{eq:Conn=J}
  \expmap : \Conn \xrightarrow{\simeq} J\formal, \quad \nabla \mapsto \exp_\nabla,
\end{equation}
which assigns to each connection $\nabla$ a formal exponential map $\expmap_\nabla : \widehat{T_0 V} \to \widehat{V}$, which is completely determined by the way it pulls back functions $f \in \F$,
\begin{equation}\label{eq:exp_definition}
  \expmap^*_\nabla (f) = (\nabla^i f |_0)_{i \geq 0} = (f(0), \nabla f |_0, \nabla^2 f |_0, \cdots) \in \prod_{i \geq 0} S^i (T_0 V)^* = \F_T,
\end{equation}
where $\nabla f = df$ and $\nabla^i f = \nabla^{i-1} df$ for $i \geq 2$.  The torsionfreeness and flatness of $\nabla$ guanrantee that the terms in the expression are symmetric tensors.

Moreover, $G\formal$ naturally acts on $\Conn$ from left by pushing forward connections via automorphisms of $\widehat{V}$. By Lemma 4.1, \cite{Diagonal}, the bijection $\exp : \Conn \to J\formal$ is $G\formal$-equivariant.

\subsubsection{Bundle of formal coordinates and connections}\label{subsec:Formal_Geometry}

We introduce the bundle of formal coordinate systems $X_{coord}$ of a smooth complex manifold $X$ from \S 4.4., \cite{Kapranov}. At each point $x \in X$ the fiber $X_{coord, x}$ is the space of infinite jets of biholomorphisms $\varphi: V \simeq \complex^n \to X$ with $\varphi(0) = x$. $X_{coord}$ is naturally a holomorphic principal $G\formal$-bundle.

We can apply the associated bundle construction to the principal $G\formal$-bundle $X_{coord}$ to globalize various objects defined in \S \ref{subsec:Formal_Discs}. There is a canonical isomorphism between bundles of algebras
\begin{displaymath}
  X_{coord} \times_{G\formal} \F \simeq \mathcal{J}^\infty_X
\end{displaymath}
and hence we have a tautological trivalization of the jet bundle $\JetX$ over $X_{coord}$
\begin{equation}\label{eq:coord}
  X_{coord} \times_X \mathcal{J}^\infty_X \simeq X_{coord} \times \F.
\end{equation}
Other related jet bundles, such as $\mathcal{J}^\infty T_X$ ($\mathcal{J}^\infty T^*_X$, resp.), the jet bundle of the tangent bundle (cotangent bundle, resp.), can be obtained in a similar way by the associated bundle construction.

Another related bundle $\pi: X_{exp} \to X$ is the bundle of formal exponential maps introduced in \cite{Kapranov}, which we denote by $X_{exp}$. At each $x \in X$ the fiber $X_{exp, x}$  is the space of jets of holomorphic maps $\phi: T_x X \to X$ such that $\phi(0) = x$, $d_0 \phi = \Id_{T_x X}$. We have a canonical isomorphism
\begin{displaymath}
  X_{exp} \to X_{coord} \times_{G\formal} J\formal \simeq X_{coord} \times_{G\formal} (G\formal / \GL_n)
\end{displaymath}
which hence induces a biholomorphism
\begin{equation}\label{eq:GL=exp}
  X_{coord} / \GL_n \simeq X_{exp}
\end{equation}
We also defined in \cite{Diagonal} the bundle of jets of flat torsion-free connection
\begin{displaymath}
  X_{conn} = X_{coord} \times_{G\formal} \Conn
\end{displaymath}
whose fiber at a each point $x \in X$ consists of all flat torsion-free connections on the formal neighborhood of $x$. The $G\formal$-equivariant bijection  $\expmap : \Conn \xrightarrow{\simeq} J\formal$ induces an identification between the $X_{conn}$ and $X_{exp}$. We regard them as the same bundle with different descriptions.

There is a tautological flat and torsion-free connection over $X_{conn}$,
\begin{displaymath}
  \nabla_{tau} : \pi^* \mathcal{J}^\infty T^*X \to \pi^* \mathcal{J}^\infty T^*X \otimes_{\pi^* \mathcal{J}^\infty_X} \pi^* \mathcal{J}^\infty T^*X,
\end{displaymath}
which is $\Osheaf_{X_{conn}}$-linear and satisfies the Leibniz rule with respect to the differential
\begin{displaymath}
  \widetilde{d}\formal: \pi^* \mathcal{J}^\infty_X \to \pi^* \mathcal{J}^\infty T^*X
\end{displaymath}
that is the pullback of
\begin{displaymath}
  d\formal: \mathcal{J}^\infty_X \to \mathcal{J}^\infty T^*X.
\end{displaymath}
Here $d\formal$ is a $\Osheaf_X$-linear differential obtained by apply the associated bundle construction with $X_{coord}$ and the de Rham differential $d : \Osheaf_{\widehat{V}} \to T^*\widehat{V}$ on the formal disc $\widehat{V}$.

On the other hand, since $X_{conn}$ can also be interpreted as the bundle $X_{exp}$ of formal exponential maps, we have a tautological isomorphism between sheaves of algebras over $X_{conn}=X_{exp}$,
\begin{displaymath}
  Exp^*: \pi^* (X_{coord} \times_{G\formal} \F) \to \pi^* (X_{coord} \times_{G\formal} \F_T).
\end{displaymath}
The domain is identified as $\pi^* \JetX$ or $\pi^* \Osheaf_{\Xdiag}$, the pullback via $\pi$ of the structure sheaf of the formal neighborhood of the diagonal in $X \times X$, while for the codomain we have
\begin{displaymath}
  X_{coord} \times_{G\formal} \F_T \simeq X_{coord} \times_{G\formal} \GL_n \times_{\GL_n} \F_T \simeq X_{coord} / J\formal \times_{\GL_n} \F_T
\end{displaymath}
by our definition of the $G\formal$-action on $\F_T$. But the principal $\GL_n$-bundle $X_{coord} / J\formal$ is exactly the bundle of ($0$th-order) frames on $X$, so
\begin{displaymath}
  X_{coord} / J\formal \times_{\GL_n} V  \simeq   X_{coord} / J\formal \times_{\GL_n} T_0 V   \simeq TX.
\end{displaymath}
and similarly 
\begin{displaymath}
  X_{coord} / J\formal \times_{\GL_n} V^*   \simeq T^*X.
\end{displaymath}
Since the $\GL_n$-action respects the decomposition $\F_T = \prod_{i \geq 0} S^i V^*$, we get
\begin{displaymath}
  X_{coord} / J\formal \times_{\GL_n} \F_T \simeq \prod_{i \geq 0} S^i T^*X = \Shat(T^*X),
\end{displaymath}
which is the structure sheaf of $X\formal_{TX}$, the formal neighborhood of the zero section of $TX$. In short, we have a \emph{tautological exponential map}
\begin{displaymath}
  Exp: \pi^* X\formal_{TX} \to \pi^* \Xdiag
\end{displaymath}
or equivalently, a  \emph{tautological Taylor expansion map}
\begin{displaymath}
  Exp^*: \pi^* \Osheaf_{\Xdiag} \to \pi^* \Osheaf_{X\formal_{TX}}.
\end{displaymath}
which is an isomorphism of bundles of topological algebras. The induced map between associated bundle of graded algebras
\begin{displaymath}
  \graded Exp^* : \pi^* \graded \Osheaf_{\Xdiag} = \pi^* S(T^*X) \to \pi^* S(T^*X)
\end{displaymath}
is the identity map. By \eqref{eq:exp_definition} we can write $Exp^*$ explicitly in terms of $\nabla_{tau}$,
\begin{equation}
  Exp^* (f) = (\nabla_{tau}^i f |_0)_{i \geq 0} = (f(0), \nabla_{tau} f |_0, \nabla^2_{tau} f |_0, \cdots) \in \pi^* \Shat(T^*X),
\end{equation}
where $|_0$ stands for the 'restriction to the origin' map $\pi^* S^i \mathcal{J}^\infty T^*X \to \pi^* S^i T^*X$. It is the globalization of the natural restriction map $T^*\widehat{V} \to T^*_0 \widehat{V} = V^*$ by applying the associated bundle construction with $X_{coord}$ and then pulling back onto $X_{conn}$ via $\pi$. Again $\nabla_{tau} f$ means $d\formal f$ and so on.

The Taylor expansion map $Exp^*$ induces a natural bijection between global smooth sections of $X_{conn}$ and all possible smooth isomorphisms between $\JetX$ and $\Shat(T^*X)$ which are the identity map on the level of associated graded algebras. Given any smooth section $\sigma$ of $X_{conn}$, we denote by
\begin{displaymath}
  \expmap^*_{\sigma} : \JetX \to \Shat(T^*X)
\end{displaymath}
the corresponding smooth homomorphism of bundles of algebras over $X$. It is holomorphic if and only if $\sigma$ is holomorphic. In general, $X_{conn}$ carries a flat $(0,1)$-connection $\overline{d}$, such that for any given smooth section $\sigma$ of $X_{conn}$, its anti-holomorphic differential
\begin{equation}\label{eq:omega}
  \omega_\sigma := \overline{d} \sigma \in \A^{0,1}(\mathbf{j}\formal(TX))
\end{equation}
is well-defined and it satisfies a Maurer-Cartan type equation
\begin{equation}\label{eq:MCeqn_omega}
  \dbar \omega_\sigma + \frac{1}{2} [\omega_\sigma, \omega_\sigma] = 0.
\end{equation}
We denote by 
\begin{displaymath}
  \alpha_\sigma^n \in \A^{0,1}_X(\Hom(S^n TX, TX)) = \A^{0,1}_X(\Hom(T^*X, S^n T^*X)).
\end{displaymath}
the $n$-th graded component of $\omega$ in the decomposition \eqref{eq:progroup}. By abuse of notation, we also denote the $\AOD(X)$-linear extension of $\exp^*_\sigma$ by
\begin{equation}
  \expmap^*_\sigma : \A^\bullet(X\formal_{X \times X}) \to \AOD_X(\Shat(T^*X))
\end{equation}
between graded algebras. It is in general not a homomorphism of dgas. The deficiency is measured exactly by $\omega$ since
\begin{equation}\label{eq:omega_exp}
  \omega_\sigma = \dbar \expmap^*_\sigma \circ (\expmap^*_\sigma)^{-1},
\end{equation}
where $\dbar \expmap^*_\sigma = [\dbar, \expmap^*_\sigma]$. Define a new differential $D_\sigma = \dbar - \sum_{n \geq 2} \widetilde{\alpha}_\sigma^n$ on $\AOD_X(\Shat(T^*X)$, where $\widetilde{\alpha}_\sigma^n$ is the odd derivation of the graded algebra $\AOD(\Shat(T^*X))$ induced by $\alpha_\sigma^n$. Then $D_\sigma^2 = 0$ by \eqref{eq:MCeqn_omega} and we have

\begin{proposition}[\cite{Diagonal}, Prop. 4.4.]\label{prop:exponential_map}
For any given smooth section $\sigma$ of $X_{conn}$, the map
\begin{displaymath}
  \expmap^*_\sigma : (\A^\bullet(X\formal_{X \times X}),\dbar) \to (\AOD(\Shat(T^*X)), D_\sigma)
\end{displaymath}
is an isomorphism of dgas. 
\end{proposition}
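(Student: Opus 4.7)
The plan is to prove the statement in two steps: first, that $\expmap^*_\sigma$ is an isomorphism of graded algebras, and second, that it intertwines the differentials. Since $\dbar^2 = 0$ and $\expmap^*_\sigma$ is bijective, $D_\sigma^2 = 0$ then follows automatically; alternatively, this identity is precisely the translation of the Maurer-Cartan equation \eqref{eq:MCeqn_omega} into derivation language, which was the motivation for the definition of $D_\sigma$ in the first place.

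The graded-algebra isomorphism is essentially tautological from the construction. The tautological Taylor expansion $Exp^*$ is a fiberwise isomorphism of topologically filtered bundles of algebras over $X_{conn}$, inducing the identity on the associated graded bundle $\pi^* S(T^*X)$. Pulling back along the smooth section $\sigma$ and extending $\AOD(X)$-linearly yields a graded-algebra isomorphism $\AOD(\JetX) \xrightarrow{\sim} \AOD(\Shat(T^*X))$, and via Proposition \ref{prop:isom_diag_dga} the source is canonically identified with $\A^\bullet(\Xdiag)$.

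For the intertwining relation, I would unpack \eqref{eq:omega_exp} as
\[
\dbar \circ \expmap^*_\sigma - \expmap^*_\sigma \circ \dbar = \widetilde{\omega}_\sigma \circ \expmap^*_\sigma,
\]
where $\widetilde{\omega}_\sigma := \sum_{n \geq 2} \widetilde{\alpha}_\sigma^n$ is the total odd derivation of $\AOD(\Shat(T^*X))$ induced by the components of $\omega_\sigma$. Transposing the second term to the left-hand side and invoking the definition $D_\sigma = \dbar - \widetilde{\omega}_\sigma$ gives exactly $D_\sigma \circ \expmap^*_\sigma = \expmap^*_\sigma \circ \dbar$.

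The main obstacle is not conceptual but bookkeeping in the completed, infinite-dimensional setting. One must verify that $\sum_{n \geq 2} \widetilde{\alpha}_\sigma^n$ defines an honest derivation on $\AOD(\Shat(T^*X))$, which works because each $\widetilde{\alpha}_\sigma^n$ raises the symmetric-degree filtration by $n - 1 \geq 1$, so only finitely many summands contribute on each filtration quotient. One must also check that both the graded-algebra isomorphism and the intertwining identity are compatible with the inverse-limit structure defining $\A^\bullet(\Xdiag) = \varprojlim_r \A^\bullet(\Xdiagfinite)$ and $\Shat(T^*X) = \varprojlim_r S^{\leq r}(T^*X)$; this amounts to checking the corresponding identities at each finite jet order and passing to the limit, using that $\exp^*_\sigma$ preserves filtrations so that all the formulas descend to every finite-order truncation.
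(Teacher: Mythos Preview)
Your proposal is correct and follows exactly the approach the paper sketches in the paragraph immediately preceding the proposition: the graded-algebra isomorphism comes from the tautological Taylor expansion pulled back along $\sigma$, and the intertwining of differentials is precisely the content of equation \eqref{eq:omega_exp} together with the definition $D_\sigma = \dbar - \sum_{n\geq 2}\widetilde{\alpha}_\sigma^n$. The paper itself does not give a self-contained proof here, deferring to \cite{Diagonal}, Prop.~4.4; your write-up simply makes explicit the two steps and the filtration bookkeeping that the paper leaves implicit.
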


\subsection{Kapranov's result for  K\"ahler manifolds}\label{subsec:Kapranov_Kahler}

Now suppose that $X$ is equipped with a K\"ahler metric $h$. Let $\nabla$ be the canonical $(1,0)$-connection in $TX$ associated with $h$, so that
\begin{equation}\label{eq:flatness}
  [\nabla,\nabla] = 0 ~\text{in}~ \A^{2,0}_X(\End(TX)).
\end{equation}
and it is torsion-free, which is equivalent to the condition for $h$ to be K\"ahler.

Set $\widetilde{\nabla} = \nabla + \dbar$, where $\dbar$ is the $(0,1)$-connection defining the complex structure. The curvature of $\widetilde{\nabla}$ is just
\begin{equation}\label{defn:AtiyahClass}
  R = [\dbar, \nabla] \in \A^{1,1}_X(\End(TX)) = \A^{0,1}_X(\Hom(TX \otimes TX, TX))
\end{equation}
which is a Dolbeault representative of the Atiyah class $\alpha_{TX}$ of the tangent bundle. In particular one has the Bianchi identity:
\begin{displaymath}
  \dbar R = 0 ~ \text{in} ~ \A^{0,2}_X(\Hom(TX \otimes TX, TX))
\end{displaymath}
Actually, by the torsion-freeness we have
\begin{displaymath}
  R \in \A^{0,1}_X(\Hom(S^2TX,TX))
\end{displaymath}
Now define tensor fields $R_n$, $n \geq 2$, as higher covariant derivatives of the curvature:
\begin{equation}\label{defn:Derivative_AtiyahClass}
  R_n \in \A^{0,1}_X(\Hom(S^2TX \otimes TX^{\otimes(n-2)}, TX)), \quad R_2:=R, \quad R_{i+1}=\nabla R_i
\end{equation}
In fact $R_n$ is totally symmetric, i.e.,
\begin{displaymath}
  R_n \in \A^{0,1}_X(\Hom(S^n TX,TX)) = \A^{0,1}_X(\Hom(T^*X, S^n T^*X))
\end{displaymath}
by the flatness of $\nabla$ \eqref{eq:flatness}. Note that if we think of $\nabla$ as the induced connection on the cotangent bundle, the same formulas \eqref{defn:AtiyahClass} and \eqref{defn:Derivative_AtiyahClass} give $-R_n$.

The connection $\nabla$ determines a smooth section of $X_{conn}$,which we write as $\sigma=[\nabla]_\infty$, by assigning to each point $x \in X$ the holomorphic jets of $\nabla$. This has been done implicitly in the proof of Lemma 2.9.1., \cite{Kapranov}. One can check that the induced Taylor expansion map
\begin{displaymath}
  \expmap^*_\sigma : \A^\bullet(\Xdiag) \xrightarrow{\simeq} \AOD_X(\Shat(T^*X))
\end{displaymath}
by
\begin{equation}\label{eq:exp_Kahler}
  \expmap^*_\sigma ([\eta]_\infty) = (\Delta^*\eta, \Delta^*\nabla \eta, \Delta^*\nabla^2 \eta, \cdots, \Delta^*\nabla^n \eta, \cdots) \in \AOD_X(\Shat(T^*X))
\end{equation}
for any $[\eta]_\infty \in \A^\bullet(\Xdiag)$. Here $\nabla$ is understood as the pullback of $\nabla$ (on the cotangent bundle) via $\proj_2$, which is a constant family of connections along fibers of $\proj_1$, instead of jets of $\nabla$. The key observation is that the right hand side of the formula \label{eq:exp_Kahler} only depends on the class $[\eta]_\infty \in \A^\bullet(\Xdiag)$ and the holomorphic jets of the $\nabla$. In \cite{Diagonal}, we reformulated and reproved Theorem 2.8.2., \cite{Kapranov} in our language.

\begin{theorem}[Thm 3.2., \cite{Diagonal}]\label{thm:Kapranov_Diagonal}
  Assume $X$ is K\"ahler. With the notations from \S \ref{subsec:Formal_Geometry} and above, we have $\alpha_\sigma^n = - R_n$, i.e., $\omega_\sigma = - \sum_{n \geq 2} R_n$. Thus there is an isomorphism between dgas
  \begin{displaymath}
    \expmap^*_\sigma : (\A^\bullet(\Xdiag),\dbar) \to (\AOD_X(\Shat(T^*X)), D_\sigma)
  \end{displaymath}
  The derivation $D_\sigma = \dbar + \sum_{n \geq 2} \widetilde{R}_n$, where $\widetilde{R}_n$ is the odd derivation of $\AOD(\Shat(T^*X))$ induced by $R_n$.
\end{theorem}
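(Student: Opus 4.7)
The plan is to combine Proposition \ref{prop:exponential_map} with a direct computation of the commutator of $\dbar$ with the Taylor expansion map \eqref{eq:exp_Kahler}. Since Proposition \ref{prop:exponential_map} already provides a dga isomorphism $\expmap^*_\sigma : (\A^\bullet(\Xdiag),\dbar) \to (\AOD_X(\Shat(T^*X)), D_\sigma)$ for every smooth section $\sigma$ of $X_{conn}$, the real content here is to identify the Maurer--Cartan form $\omega_\sigma \in \A^{0,1}(\mathbf{j}\formal(TX))$ associated to the particular $\sigma = [\nabla]_\infty$ coming from the Kähler connection. By \eqref{eq:omega_exp}, $\omega_\sigma = (\dbar \expmap^*_\sigma) \circ (\expmap^*_\sigma)^{-1}$, so degree by degree I must measure the failure of \eqref{eq:exp_Kahler} to intertwine the two $\dbar$-operators, and then read off $D_\sigma$ from the formula in Proposition \ref{prop:exponential_map}.

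Before computing, I would verify that $\sigma = [\nabla]_\infty$ is a genuine smooth section of $X_{conn}$: at each $x \in X$ one takes the holomorphic jet of $\nabla$ at $x$, which varies smoothly in $x$ even though $\nabla$ itself is not holomorphic. Torsion-freeness (equivalent to the Kähler condition) places this jet inside $\Conn \subset J\formal$, and flatness \eqref{eq:flatness} ensures the associated covariant derivatives live in totally symmetric tensor powers. I would then transport $\nabla$ to $X \times X$ via $\proj_2$ and decompose $\dbar = \dbar_1 + \dbar_2$; since $\nabla$ acts only along the second factor, $[\dbar_1, \nabla] = 0$, so the only commutator that contributes is $[\dbar, \nabla] = [\dbar_2, \nabla] = R$, acting on $\Shat(\proj_2^* T^*X)$ as the derivation $\widetilde{R}_2$.

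The core lemma is the inductive identity
\begin{equation*}
  \nabla^n \dbar \eta \;=\; \dbar \nabla^n \eta \;-\; \sum_{i=1}^{n-1} \widetilde{R}_{n-i+1}\bigl(\nabla^i \eta\bigr), \qquad \eta \in \AOO(\Xdiag),
\end{equation*}
established by induction on $n$: the base case $n=1$ is the definition of $R$; the inductive step uses the commutation $[\nabla, \widetilde{R}_k] = \widetilde{R}_{k+1}$ (equivalent to the recursion $R_{k+1} = \nabla R_k$) together with the Leibniz rule for $\nabla$. Applying $\Delta^*$ and reading off the $S^n(T^*X)$-component gives $(\dbar \circ \expmap^*_\sigma - \expmap^*_\sigma \circ \dbar)\eta = -\sum_{n \geq 2}\widetilde{R}_n(\expmap^*_\sigma \eta)$ on functions, which by \eqref{eq:omega_exp} forces $\alpha_\sigma^n = -R_n$ (the sign reflects the convention noted after \eqref{defn:Derivative_AtiyahClass} that passing from $TX$ to $T^*X$ flips signs), and hence $D_\sigma = \dbar + \sum_{n \geq 2}\widetilde{R}_n$ by the formula in Proposition \ref{prop:exponential_map}. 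Extension from scalar $\eta$ to $(0,k)$-form coefficients is automatic by $\A^{0,\bullet}(X)$-linearity, so no new input is required there.

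The main obstacle I anticipate is the bookkeeping of signs and of the combinatorial expansion of $[\dbar, \nabla^n]$: one must identify the cancellation that collapses a naive double sum — obtained by iterating $\nabla$ across $\widetilde{R}_k$ factors via Leibniz — into the single sum above, and confirm that $R_n$ is totally symmetric, which is precisely where flatness \eqref{eq:flatness} (and not just torsion-freeness) is essential. Consistency of the resulting $\omega_\sigma$ with the Maurer--Cartan equation \eqref{eq:MCeqn_omega} can be checked a posteriori via the Bianchi identity $\dbar R = 0$, but this already follows abstractly from Proposition \ref{prop:exponential_map}, so no extra verification is needed.
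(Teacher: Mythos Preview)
Your proposal is correct and follows essentially the same route as the original proof in \cite{Diagonal} (in the present paper the theorem is only cited, not reproved). The core inductive identity $\nabla^n \dbar \eta = \dbar \nabla^n \eta - \sum_{i=1}^{n-1} \widetilde{R}_{n-i+1}(\nabla^i \eta)$, obtained by expanding $\nabla^i \circ \widetilde{R}_2 \circ \nabla^j$ via the Leibniz rule and the recursion $R_{k+1} = \nabla R_k$, followed by applying $\Delta^*$, is precisely the argument used there.
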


We conclude this section by a slightly more generalized version of Theorem \ref{thm:Kapranov_Diagonal}, which will be used later in \S ~ \ref{subsec:Taylor_normal}. Suppose $f: X \to Y$ is an arbitrary holomorphic map. We consider the graph of $f$
\begin{displaymath}
\widetilde{f} = (\Id,f): X \to X \times Y
\end{displaymath}
which is a closed embedding. So we can consider the formal neighborhood $X_{X \times Y}\formal$. All the constructions above can be carried out in almost the same way with only slight adjustment and give us a description of the Dolbeault dga $\A^\bullet(X_{X \times Y}\formal)$. Namely, consider the pullback bundle $f^* Y_{conn}$ over $X$. Each smooth section $\sigma$ of $f^* Y_{conn}$ naturally corresponds to an isomorphism
\begin{displaymath}
  \eta_\sigma : \A^\bullet(X_{X \times Y}\formal) \hookrightarrow \AOD_X(\Shat(f^*T^*Y))
\end{displaymath}
of graded algebras. One can also view sections of $f^*Y_{conn}$ as jets of flat torsion-free connections on $X\formal_{X \times Y}$ along $Y$-fibers. In particular, when $Y$ carries a K\"ahler metric and the canonical $(1,0)$-connection $\nabla$, we can pullback $\nabla$ via the projection $X \times Y \to Y$, which determines a smooth section $\sigma$ of $f^* Y_{conn}$ and a Taylor expansion map
\begin{displaymath}
  \expmap^*_\sigma : \A^\bullet(X_{X \times Y}\formal) \to \AOD_X(\Shat(f^* T^*Y)), \quad [\zeta]_\infty \mapsto (\widetilde{f}^*\zeta, \widetilde{f}^*\nabla \zeta, \widetilde{f}^*\nabla^2 \zeta, \cdots),
\end{displaymath}
for any $[\zeta]_\infty \in \A^\bullet(X_{X \times Y}\formal)$. By abuse of notations, we still write
\begin{displaymath}
  R_n \in \A^{0,1}_Z(\Hom( f^*TY, S^n(f^*TY)))
\end{displaymath}
as the pullback of the curvature form of $Y$ and its covariant derivatives via $f$. Then we have the following theorem,

\begin{theorem}\label{thm:formal_graph}
  We have an isomorphism between dgas
  \begin{displaymath}
    \expmap^*_\sigma : (\A^\bullet(X_{X \times Y}\formal),\dbar) \xrightarrow{\simeq} (\AOD_X(\Shat^\bullet(f^* T^*Y)),D_\sigma)
  \end{displaymath}
  where $D_\sigma = \dbar + \sum_{n \geq 2} \widetilde{R}_n$ and $\widetilde{R}_n$ is the derivation of degree $+1$ induced by $R_n$.
\end{theorem}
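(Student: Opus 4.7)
The plan is to adapt the proof strategy of Theorem~\ref{thm:Kapranov_Diagonal} to the graph embedding $\widetilde{f}: X \hookrightarrow X \times Y$, reducing everything first to an analog of Proposition~\ref{prop:exponential_map} in this slightly more general context and then to identifying the Maurer--Cartan element $\omega_\sigma$ with $-\sum_{n \geq 2} R_n$.

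First I would observe that the formal-geometric machinery of \S~\ref{subsec:Formal_Geometry} transports verbatim after replacing the second copy of $X$ by $Y$: one has the bundle $Y_{conn}$ of jets of flat torsion-free connections on $TY$, and smooth sections of the pullback $f^*Y_{conn} \to X$ correspond bijectively to smooth algebra isomorphisms of $\Osheaf_{X\formal_{X\times Y}}$ with $\Shat(f^*T^*Y)$ inducing the identity on associated gradeds. The analog of Proposition~\ref{prop:exponential_map} then follows by the same argument, with the projection $\proj_Y: X \times Y \to Y$ playing the role that $\proj_2: X \times X \to X$ played in the diagonal setting. This yields the graded-algebra isomorphism $\expmap^*_\sigma$ for any section $\sigma$, together with the fact that it intertwines $\dbar$ with some differential $D_\sigma = \dbar - \sum_{n \geq 2} \widetilde{\alpha}_\sigma^n$, where the $\alpha_\sigma^n$ are the graded components of $\omega_\sigma := \overline{d}\sigma$ in the sense of \eqref{eq:omega}.

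The heart of the argument is then to identify $\alpha_\sigma^n = -R_n$ for the specific $\sigma$ associated with $\proj_Y^*\nabla$. Following the proof of Theorem~\ref{thm:Kapranov_Diagonal}, I would set $\Upsilon_2 := [\dbar, \nabla]$ as a tensor on $X \times Y$ (with $\nabla := \proj_Y^*\nabla$) and inductively $\Upsilon_{i+1} := \nabla \Upsilon_i$, then prove by induction on $n$ the commutator identity
\begin{equation*}
  \nabla^n \dbar \zeta \;=\; \dbar \nabla^n \zeta \;-\; \sum_{i=1}^{n-1} \widetilde{\Upsilon}_{n-i+1} \circ \nabla^i \zeta
\end{equation*}
for smooth $\zeta$ defined near $\widetilde{f}(X)$. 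The crucial simplification is that $\proj_Y^*\nabla$ is constant along $X$-fibers, so the $X$-antiholomorphic part of $\dbar$ commutes with $\nabla$ and the entire curvature contribution reduces to $\proj_Y^*[\dbar_Y,\nabla] = \proj_Y^* R$; flatness and torsion-freeness of the Chern connection on $Y$ then ensure that the $\Upsilon_i$ take values in the symmetric tensor algebra. Applying $\widetilde{f}^* = (\Id,f)^*$ converts the identity on $X \times Y$ to the corresponding identity on $X$ and sends $\Upsilon_n$ to $R_n$, yielding simultaneously $\omega_\sigma = -\sum_{n \geq 2} R_n$ and the intertwining relation $\expmap^*_\sigma \circ \dbar = (\dbar + \sum_{n \geq 2} \widetilde{R}_n) \circ \expmap^*_\sigma$.

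The main obstacle I anticipate is bookkeeping rather than conceptual. One has to decompose $\dbar$ on $X \times Y$ carefully into $X$- and $Y$-antiholomorphic parts, track how iterated covariant derivatives of $\proj_Y^* R$ behave under $\widetilde{f}^*$, and verify that the resulting tensors on $X$ coincide with the $R_n$ defined by $R_2 = R$, $R_{i+1} = \nabla R_i$. No genuinely new geometric input beyond the diagonal case is required, since the K\"ahler structure on $Y$ already supplies all the flatness and torsion-freeness identities that drove the diagonal argument; the graph construction makes $\widetilde{f}: X \hookrightarrow X \times Y$ behave locally like a ``partial diagonal'' adapted to $f$.
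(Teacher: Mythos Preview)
Your proposal is correct and follows essentially the same approach as the paper: the paper does not give a separate proof of this theorem but presents it as a straightforward generalization of Theorem~\ref{thm:Kapranov_Diagonal}, obtained by replacing the second copy of $X$ by $Y$ and $\Delta^*$ by $\widetilde{f}^*$ throughout. The commutator identity you write down and the inductive bookkeeping with the $\Upsilon_i$'s are exactly the argument used (in \cite{Diagonal}) for the diagonal case, and the paper explicitly notes that ``all the constructions above can be carried out in almost the same way with only slight adjustment.''
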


\section{Case of general embeddings}\label{sec:General_Case}

Let $i: X \hookrightarrow Y$ be an arbitrary embedding and $\A^\bullet(\Yhat)$ the Dolbeault dga associated to the formal neighborhood of $X$ inside $Y$ as in \S~\ref{subsec:Formal_Geometry}. The goal is to build some appropriate isomorphism
$\A^\bullet(\Yhat) \simeq \AOD_X(\Shat^\bullet(\conormal))$ and write down the $\dbar$-derivation explicitly under this identification. We will show that this can be derived from the special yet universal case considered in \S ~\ref{sec:Diagonal_Formal}.

\subsection{Differential geometry of complex submanifolds}

\subsubsection{Splitting of normal exact sequence and Kodaira-Spencer class}\label{subsubsec:KS_class}

Over $X$ we have the short exact sequence of holomorphic vector bundles defining the normal bundle $N=\normal$
\begin{equation}\label{seq:normal}
  0 \to TX \xrightarrow{\iota} i^*TY \xrightarrow{p} N \to 0
\end{equation}
and its dual
\begin{equation}\label{seq:conormal}
  0 \to \conormal \xrightarrow{p^\vee} i^*T^*Y \xrightarrow{\iota^\vee} T^*X \to 0
\end{equation}
We fix a choice of \Cinf-splitting of the normal exact sequence \eqref{seq:normal}, i.e., two smooth homomorphisms of vector bundles $\projtan: i^*TY \to TX$ and $\liftnormal: N \to i^*TY$ satisfying
\begin{displaymath}
  \projtan \circ \iota = \Id_{TX}, \quad p \circ \liftnormal = \Id_{N}, \quad
  \iota \circ \projtan + \liftnormal \circ p = \Id_{i^*TY}
\end{displaymath}
and denote the corresponding dual splitting on the conormal exact sequence \eqref{seq:conormal} by $\projtan^\vee: T^*X \to i^*T^*Y$ and $\liftnormal^\vee: i^*T^*Y \to \conormal$. We can choose the splittings as the orthonormal decomposition induced by a K\"ahler metric on $Y$ if there is one, but again we will never need a metric in our general discussion.

Think of $\projtan^\vee$ as a \Cinf-section of the holomorphic vector bundle $\Hom(T^*X,i^*T^*Y),$ we can form
  \[ \beta= \beta_{X/Y} := \dbar \projtan^\vee \in \A^{0,1}_X(\Hom(T^*X,T^*Y)). \]
In fact
\begin{displaymath}
  \beta \in \A^{0,1}_X(\Hom(T^*X,\conormal)).
\end{displaymath}
To see this, just apply $\dbar$ on both sides of equality $\iota^\vee \circ \projtan^\vee = \Id_{T^*X}$ and note that $\iota^\vee$ is holomorphic and hence $\dbar \iota^\vee = 0$. By definition $ \dbar \beta = 0$, thus $\beta$ defines a cohomology class $[\beta] \in \gext^1_X(T^*X,\conormal)$, which is the obstruction class for the existence of a holomorphic splitting of the exact sequence \eqref{seq:conormal} or \eqref{seq:normal}. We call it the \emph{Kodaira-Spencer class}. Also note that
\begin{equation}\label{eq:beta_liftnormal}
\beta = - \dbar \liftnormal = - \dbar \liftnormal^\vee \in \A^{0,1}(\Hom(N,TX)) = \A^{0,1}(\Hom(T^*X,\conormal))
\end{equation}

\subsubsection{Shape operator}\label{subsubsec:diff_geom}

Suppose $\nabla$ is an arbitrary $(1,0)$-connection on $TY$ without any additional assumption. We use the same notation for the pullback connection on $i^*TY$ or $i^*T^*Y$ over $X$. The induced connection on the normal bundle via the chosen splitting is denoted by $\normalconn$:
\begin{displaymath}
  \normalconn_V \mu := p(\nabla_V \liftnormal(\mu)) \in C^\infty_X(N), \quad \forall~ \mu \in C^\infty_X(N), ~ V \in C^\infty(TX)
\end{displaymath}
(here we identify $T^{1,0}X$ with $TX$). Analogous to the shape operator in Riemannian geometry, we also define a linear operator $\shape : TX \otimes N \to TX$ by
\begin{equation}\label{eq:shape_operator}
  \shape^\mu(V) = - \projtan(\nabla_V \liftnormal(\mu)), \quad \forall~ \mu \in C^\infty_X(N), ~ V \in C^\infty(TX).
\end{equation}
That is, we first lift a smooth section $\mu$ of the normal bundle to a section of $i^*TY$ via the splitting, then take its derivatives with respect to the induced connection on $i^*TY$ and finally project the output into $TX$. Note that $\shape$ is in general not a holomorphic map between vector bundles.

\subsection{Taylor expansions in normal direction}\label{subsec:Taylor_normal}

\subsubsection{General discussions}

Let $i: X \hookrightarrow Y$ be a closed embedding, where $Y$ is not necessarily K\"ahler. Similar to what has been done in \S~ \ref{subsec:Formal_Geometry}, we can consider all isomorphisms
\begin{displaymath}
  \A^\bullet(\Yhat) \xrightarrow{\simeq} \AOD_X(\Shat(\conormal))
\end{displaymath}
which induces identity on the associated graded bundle $S(\conormal)$ and there is a infinite dimensional bundle $\Psi_{X/Y} \to X$ whose smooth sections correspond exactly to such isomorphisms. Indeed, for each $x \in X$, the fiber $\Psi_{X/Y, x}$ is the space of jets of holomorphic maps $\psi: N_{x} \to Y$ with $\psi(0) = x$ and $p \circ d_0 \psi = \Id_{N_x}$, where $N_x$ is the fiber of the normal bundle at point $x \in X$ and $p : i^*TY \to N$ is the natural projection.

Similarly, we can define another bundle $\Theta_{X/Y}$ over $X$ whose fiber at $x$ is the space of jets of maps $\theta: N_{x} \to T_x Y$ with $\theta(0) = x$ and $p \circ d_0 \theta = \Id_{N_x}$. Then $\Theta_{X/Y}$ admits a natural left action of $J\formal(T^*Y)$ (or more precisely, $J\formal(T^*Y|_X)$). In fact, we have a canonical isomorphism
\begin{displaymath}
  \Psi_{X/Y} \simeq Y_{exp}|_X \times_{J\formal(T^*Y)} \Theta_{X/Y} \simeq Y_{conn}|_X \times_{J\formal(T^*Y)} \Theta_{X/Y}.
\end{displaymath}

For our purpose here, however, it is not convenient to deal with $\Psi_{X/Y}$ since even we have already understood $Y_{conn}$ in various geometric ways, general sections of the bundle $\Theta_{X/Y}$ are difficult to handle. So instead we only look at linear liftings $N_x \to TY_x$ which form a subbundle $\Theta_{X/Y}^{(1)} \subset \Theta_{X/Y}$. Moreover, there is a canonical retraction $\Theta_{X/Y} \to \Theta_{X/Y}^{(1)}$ sending jets of maps $\psi: N_{x} \hookrightarrow Y$ to their linearizations $d_0 \psi$. Thus we have a fiberwise surjection of bundles
\begin{displaymath}
  \kappa : Y_{conn}|_X \times_X \Theta_{X/Y}^{(1)} \to Y_{conn}|_X \times_{J\formal(T^*Y)} \Theta_{X/Y} = \Psi_{X/Y}
\end{displaymath}
As an affine bundle over the vector bundle $\conormal \otimes TX$, $\Theta_{X/Y}^{(1)}$ admits smooth sections which are nothing but \Cinf-liftings $\rho: N \to TY$. Such a lifting $\rho$ and any section $\sigma$ of $Y_{conn}|_X$ together determine a section $\Xi$ of $\Psi_{X/Y}$ via $\kappa$, and hence an isomorphism of graded algebras
\begin{displaymath}
  \expmap^*_{X/Y, \Xi} : \A^\bullet(\Yhat) \xrightarrow{\simeq} \AOD_X(\Shat^\bullet(\conormal))
\end{displaymath}
The rest of the job is to determine which differential we should put on the codomain of this isomorphism in terms of $\rho$ and $\sigma$ to make it into an isomorphism of dgas.

As in Theorem \ref{thm:formal_graph}, denote the graph of $i$ by
\begin{displaymath}
  \widetilde{i} := (\Id, i) : X \to X \times Y.
\end{displaymath}
We regard $X$ as a submanifold of $X \times Y$ via $\widetilde{i}$, then by Theorem \ref{thm:formal_graph} a section $\sigma$ of $Y_{conn}|_X$ induces
\begin{displaymath}
  \expmap^*_\sigma : (\A^\bullet(X_{X \times Y}\formal),\dbar) \xrightarrow{\simeq} (\AOD_X(\Shat^\bullet(T^*Y)),D_\sigma)
\end{displaymath}
where $T^*Y$ is understood as the pullback $i^*T^*Y$ (we will omit $i^*$ as long as there is no confusion). The derivation
\begin{equation}\label{eq:deriv_Y_times_X}
D_\sigma = \dbar + \sum_{n \geq 2} \widetilde{R}_n
\end{equation}
and $\widetilde{R}_n$ is induced by (the pullback of) the covariant derivatives of curvature forms of $Y$.

Note that we have a commutative diagram of holomorphic maps
\begin{diagram}
  X        &\rInto^{\widetilde{i}}    & X \times Y   \\
  \dEqual  &                      &\dOnto_{\pi}  \\
  X        &\rInto^{i}            & Y
\end{diagram}
where $\pi: X \times Y \to Y$ is the natural projection. Thus by functoriality, $\pi$ induces an injective homomorphism of dgas
\begin{equation}\label{eq:pistar}
  \pi^* : (\A^\bullet(\Yhat),\dbar) \to (\AOD(X_{X \times Y}\formal),\dbar), \quad [\eta]_\infty \mapsto [\pi^*\eta]_\infty.
\end{equation}
We then compose $\pi^*$ with the isomorphism $\expmap^*_\sigma$ to get
\begin{displaymath}
  \expmap^*_\sigma \circ ~\pi^* : (\A^\bullet(\Yhat),\dbar) \to (\AOD_X(\Shat^\bullet(T^*Y),D_\sigma)
\end{displaymath}

We then extend $\liftnormal^\vee: T^*Y \to \conormal$ to obtain a homomorphism of graded algebras
\begin{equation}\label{eq:projconormal}
  \liftnormal^\vee : \AOD_X(\Shat^\bullet(T^*Y)) \to \AOD_X(\Shat^\bullet(\conormal)).
\end{equation}
Composing with $\expmap^*_\sigma \circ \pi^*$ in \eqref{eq:exp_comp_pi} we get a homomorphism of graded algebras
\begin{displaymath}
  \liftnormal^\vee \circ \expmap^*_\sigma \circ \pi^* ~:~ \A^\bullet(\Yhat) \to \AOD_X(\Shat^\bullet(\conormal)).
\end{displaymath}

\begin{lemma}
  With all the notations above, we have
  \begin{equation}\label{eq:expXY}
    \expmap^*_{X/Y,\Xi} = \liftnormal^\vee \circ \expmap^*_\sigma \circ \pi^*
  \end{equation}
\end{lemma}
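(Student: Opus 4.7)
The plan is to evaluate both sides of \eqref{eq:expXY} pointwise on $X$ and show that they agree as Taylor expansions, once the section $\Xi$ is identified with the composition of jets $\exp_\sigma \circ \liftnormal$.

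First I would unpack the definition of the retraction $\kappa$ to identify $\Xi=\kappa(\sigma,\rho)$ explicitly. The identification of $Y_{conn}$ with $Y_{exp}$ lets us view $\sigma_x$ as the jet of a formal exponential map $\exp_{\sigma_x}:T_xY\to Y$ with $\exp_{\sigma_x}(0)=x$ and $d_0\exp_{\sigma_x}=\Id_{T_xY}$. The lifting $\rho=\liftnormal$ gives a linear map $\rho_x:N_x\to T_xY$ with $p\circ\rho_x=\Id_{N_x}$. Under the $J\formal(T^*Y)$-action in the quotient $Y_{conn}|_X\times_{J\formal(T^*Y)}\Theta_{X/Y}=\Psi_{X/Y}$, the pair $(\sigma_x,\rho_x)$ represents the jet of the composition
\[
  \psi_x := \exp_{\sigma_x}\circ \rho_x:N_x\longrightarrow Y,
\]
which satisfies $\psi_x(0)=x$ and $p\circ d_0\psi_x=p\circ\rho_x=\Id_{N_x}$, so indeed $\psi_x\in\Psi_{X/Y,x}$; this is $\Xi_x$.

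Next I would interpret $\expmap^*_{X/Y,\Xi}$ as the Taylor expansion along $\psi_x$: by construction, $\expmap^*_{X/Y,\Xi}(\eta)$ evaluated at $x$ is the formal power series on $N_x$ obtained from pulling back (the jet of) $\eta$ along $\psi_x$. Using $\psi_x=\exp_{\sigma_x}\circ\rho_x$, this pullback factors as $\rho_x^{\ast}\circ\exp_{\sigma_x}^{\ast}$ on formal functions. The inner factor $\exp_{\sigma_x}^*$ is precisely the Taylor expansion map \eqref{eq:exp_definition} applied to the connection encoded by $\sigma_x$; globalizing over $X$ this is $\expmap^*_\sigma\circ\pi^*$, because $\pi\circ\widetilde{i}=i$ forces $\widetilde{i}^*\pi^*\nabla^n\eta=i^*\nabla^n\eta$ for the constant family of connections along the $Y$-fibers. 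Concretely, both sides send $[\eta]_\infty$ at $x$ to the sequence $(i^*\eta,\, i^*\nabla\eta,\, i^*\nabla^2\eta,\ldots)\in\Shat(T^*_xY)$.

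Finally, since $\rho_x:N_x\to T_xY$ is linear, its action on symmetric tensors is just the symmetric power of the transpose $\rho_x^\vee=\liftnormal^\vee_x:T^*_xY\to \conormal_x$, and this is precisely the multiplicative extension defined in \eqref{eq:projconormal}. Applying $(\liftnormal^\vee)^{\otimes n}$ termwise to $(i^*\eta,i^*\nabla\eta,i^*\nabla^2\eta,\ldots)$ reproduces the Taylor expansion of $\psi_x^*\eta$ at $0\in N_x$, which is $\expmap^*_{X/Y,\Xi}([\eta]_\infty)$ at $x$. Both sides of \eqref{eq:expXY} are $\AOD(X)$-linear homomorphisms of graded algebras, so agreement of the pointwise formulas and of the extension from $\A(\Yhat)$ to $\A^\bullet(\Yhat)$ by $\AOD(X)$-linearity finishes the proof.

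The only delicate point is the identification in the first paragraph: one needs to verify that the quotient map $\kappa$ really does encode the composition $\exp_\sigma\circ\rho$ of jets, which amounts to tracing through how $J\formal(T^*Y)$ acts on $\Theta_{X/Y}$ via precomposition with formal automorphisms fixing the origin with identity tangent map. Once this geometric interpretation is pinned down, the remaining computation is functoriality of pullbacks combined with the formula for $\expmap^*_\sigma$ from Theorem~\ref{thm:formal_graph}.
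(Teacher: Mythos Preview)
Your proposal is correct and is precisely an unpacking of the paper's one-line proof, which reads in its entirety: ``Follows immediately from the definition of $\kappa$.'' You have made explicit what the paper leaves implicit---namely that $\kappa(\sigma_x,\rho_x)$ is the jet of the composite $\exp_{\sigma_x}\circ\rho_x$, so that pulling back functions factors as $\rho_x^\ast\circ\exp_{\sigma_x}^\ast = \liftnormal^\vee\circ(\expmap^*_\sigma\circ\pi^*)$ pointwise.
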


\begin{proof}
  Follows immediately from the definition of $\kappa$.
\end{proof}

Via the isomorphism $\expmap^*_{X/Y,\Xi}$ we can transfer the $\dbar$-derivation on $\A^\bullet(\Yhat)$ to a derivation on $\AOD_X(\Shat^\bullet(\conormal))$, denoted as $\Dnormal$, i.e.,
\begin{displaymath}
  \Dnormal = \expmap^*_{X/Y,\Xi} \circ ~\dbar \circ (\expmap^*_{X/Y,\Xi})^{-1}.
\end{displaymath}
Hence $\expmap^*_{X/Y,\Xi}$ becomes an isomorphism of dgas
\begin{displaymath}
  \expmap^*_{X/Y,\Xi} : (\A^\bullet(\Yhat),\dbar) \xrightarrow{\simeq} (\AOD_X(\Shat^\bullet(\conormal)),\Dnormal).
\end{displaymath}
Thus we can also transfer the homomorphism $\pi^*$ in \eqref{eq:pistar} to a homomorphism
\begin{displaymath}
  \widetilde{\pi}^* : (\AOD_X(\Shat^\bullet(\conormal)),\Dnormal) \to (\AOD_X(\Shat^\bullet(T^*Y),D_\sigma),
\end{displaymath}
by setting
\begin{equation}\label{eq:tildepi}
\widetilde{\pi}^* = \expmap^*_\sigma \circ ~\pi^* \circ (\expmap^*_{X/Y,\Xi})^{-1}.
\end{equation}
We then get the following commutative diagram:

\begin{diagram}[width=6em,height=3em]
  (\A^\bullet(\Yhat),\dbar)                           &\rTo^{\pi^*}          &(\AOD(X_{X \times Y}\formal),\dbar)  \\
  \dTo_{\expmap^*_{X/Y,\Xi}}^{\simeq}               &                      &\dTo_{\expmap^*_\sigma}^{\simeq}      \\
  (\AOD_X(\Shat^\bullet(\conormal)),\Dnormal)   &\rTo{\widetilde{\pi}^*}   &(\AOD_X(\Shat^\bullet(T^*Y),D)    \\
\end{diagram}

Note that by \eqref{eq:expXY} and \eqref{eq:tildepi} we have
\begin{equation}\label{eq:retraction}
  \liftnormal^\vee \circ \widetilde{\pi}^* = \Id : (\AOD_X(\Shat^\bullet(\conormal)),\Dnormal) \to (\AOD_X(\Shat^\bullet(\conormal)),\Dnormal)
\end{equation}
even though $\liftnormal^\vee$ is not a homomorphism of dgas.

\subsubsection{Description of $\widetilde{\pi}^*$}\label{subsubsec:pi}

From now on, let us assume $Y$ is K\"ahler and the section $\sigma$ of $Y_{conn}|_X$ is determined by the associated $(1,0)$-connection $\nabla$. However, we want to keep the reader aware that all the arguments and computations below will still work even if the K\"ahler condition is dropped and the section $\sigma$ is arbitrary. All the $\nabla$ appearing in the formulae can be intepreted as formal connections determined by $\sigma$ without any change just as in Proposition \ref{prop:exponential_map}. The K\"ahler assumption just makes sure that those terms in the final formula \eqref{eq:Dnormal_Final} of $\Dnormal$ have clearer differential geometric meanings.

By the discussion at the end of \S \ref{subsec:Kapranov_Kahler}, the homomorphism
\begin{displaymath}
  \expmap^*_\sigma \circ ~\pi^* : (\A^\bullet(\Yhat),\dbar) \to (\AOD_X(\Shat^\bullet(T^*Y),D_\sigma)
\end{displaymath}
is given by
\begin{equation}\label{eq:exp_comp_pi}
  [\eta]_\infty \mapsto (i^*\eta, i^* \nabla \eta, i^* \nabla^2 \eta, \cdots)
\end{equation}
since $\widetilde{i}^* \nabla^n \pi^* \eta = i^* \nabla^n \eta$, where the $\nabla$ on the right hand side is the original $(1,0)$-connection on $Y$ while the one on the left hand side is the pullback one along $Y$-fibers of $X \times Y$.

Before we give a description of the homomorphism $\widetilde{\pi}^*$, we make some conventions on notations. We abuse the notations and write $TY = TX \oplus N$ and $T^*Y = T^*X \oplus \conormal$ induced by the fixed splittings. The decompositions extend to tensor products, i.e., tensor product $TY^{\otimes n}$ can be decomposed into direct sum of mixed tensors of $TX$ and $N$ components and similarly for $T^*Y^{\otimes n}$. The same happens for symmetric tensor products:
\begin{equation}\label{eq:symm_decomp}
  S^n T^*Y = \bigoplus_{p+q=n} S^p T^*X \cdot S^q \conormal,
\end{equation}
where the dot stands for the commutative multiplication in the symmetric algebras.

We define a derivation
\begin{equation}\label{eq:defn_symmconn}
\symmconn : \AOD_X(\Shat^{\bullet} T^*Y) \to \AOD_X(\Shat^{\bullet+1} T^*Y)
\end{equation}
of degree $0$ for the grading from $\AOD_X$ as the composition of the operators
\begin{displaymath}
  \symmconn := \overline{\Sym} \circ \nabla^{TX}
\end{displaymath}
Namely, for any $\eta \in \AOD_X(S^n T^*Y)$, first apply the connection $\nabla^{TX}$ on $i^* S^n T^*Y$ induced by $\nabla$ to get a $T^*X \otimes S^n T^*Y$-valued form
\begin{displaymath}
\nabla^{TX} \eta \in \AOD_X(T^*X \otimes S^n T^*Y),
\end{displaymath}
then apply a variation of the usual symmetrization map
\begin{displaymath}
  \overline{\Sym} : T^*X \otimes S^n T^*Y \to S^{n+1} T^*Y,
\end{displaymath}
which we define on each component of the decomposition \eqref{eq:symm_decomp} as
\begin{displaymath}
  \overline{\Sym}_{m,n} : T^*X \otimes (S^{m-1} T^*X \cdot S^{n-m+1} \conormal) \to S^m T^*X \cdot S^{n-m+1} \conormal
\end{displaymath}
by the formula
\begin{multline}\label{eq:sym_operator}
  \overline{\Sym}_{m,n} (v_0 \otimes ( v_1 \cdot v_2 \cdots v_n )) = \frac{1}{m} v_0 \cdot v_1 \cdots v_n, \\
  \forall~ v_0 \otimes ( v_1 \cdot v_2 \cdots v_n ) \in T^*X \otimes (S^{m-1} T^*X \cdot S^{n-m+1} \conormal)
\end{multline}
and we finally get
\begin{displaymath}
  \symmconn \eta \in \AOD_X(S^{n+1} T^*Y)
\end{displaymath}
When $\eta \in \AOD_X(S^0(T^*Y)) = \AOD_X$, $\symmconn \eta = \projtan^\vee (\partial \eta) \in \AOD_X(T^*Y)$ where $\partial$ is the $(1,0)$-derivation of forms on $X$. We can inductively apply $\symmconn$ and get
\begin{displaymath}
  \symmconn^k : \AOD_X(\Shat^\bullet(T^*Y)) \to \AOD_X(\Shat^{\bullet+k}(T^*Y))
\end{displaymath}
In particular, since $\conormal$ is naturally identified as a subbundle of $T^*Y$ via $p^\vee : \conormal \to T^*Y$, we can form the restriction of $\symmconn^k$ on $\Shat^\bullet(\conormal)$:
\begin{displaymath}
\symmconn^k : \AOD_X(\Shat^\bullet(\conormal)) \to \AOD_X(\Shat^{\bullet + k}(T^*Y))
\end{displaymath}
Note that here $\symmconn^0$ is the natural inclusion $\Shat^\bullet(\conormal) \hookrightarrow \Shat^\bullet(T^*Y)$.

\begin{proposition}\label{prop:tildepi}
  We have
  \begin{displaymath}
    \widetilde{\pi}^* = \sum_{k=0}^\infty \symmconn^k
  \end{displaymath}
  where $\widetilde{\pi}^* : \AOD_X(\Shat^\bullet(\conormal)) \to \AOD_X(\Shat^\bullet(T^*Y))$ is as in \eqref{eq:tildepi}. That is, given $\mu = (\mu_k)_{k=0}^\infty \in \AOD_X(\Shat^\bullet(\conormal))$, the $n$-th component of its image $\nu = \widetilde{\pi}^*(\mu)$ is
  \begin{displaymath}
    \nu_n = \sum_{k=0}^n \symmconn^k \mu_{n-k} \in \AOD_X(S^n \conormal).
  \end{displaymath}
\end{proposition}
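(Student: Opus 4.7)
The plan is to prove the formula by induction on the symmetric degree $n$. Setting $[\eta] := (\expmap^*_{X/Y,\Xi})^{-1}(\mu) \in \A^\bullet(\Yhat)$, the explicit description of $\expmap^*_\sigma$ from Theorem \ref{thm:formal_graph} together with $\widetilde{i}^*\nabla^n(\pi^*\eta) = i^*\nabla^n\eta$ gives $\widetilde{\pi}^*(\mu)_n = i^*\nabla^n\eta$. The claim therefore reduces to the identity
\[
i^*\nabla^n\eta = \sum_{k=0}^n\symmconn^k\mu_{n-k},
\]
where $\mu_k = \liftnormal^\vee(i^*\nabla^k\eta)$ by the definition of $\expmap^*_{X/Y,\Xi}$. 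The base case $n=0$ is immediate since $\liftnormal^\vee$ restricts to the identity on $\AOD_X$.

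For the inductive step, it suffices to establish the recursion $i^*\nabla^{n+1}\eta = \mu_{n+1} + \symmconn(i^*\nabla^n\eta)$, which combined with the inductive hypothesis yields the desired formula. I would prove this recursion by invoking flatness of the $(1,0)$-connection on the K\"ahler manifold $Y$ (equation \eqref{eq:flatness}), which presents $\nabla^{n+1}\eta$ as the symmetrization of $\nabla(\nabla^n\eta) \in T^*Y\otimes S^n T^*Y$ inside $S^{n+1}T^*Y$. The pure-normal (i.e.\ $T^*X$-degree $0$) component of $i^*\nabla^{n+1}\eta$ is $\mu_{n+1}$ by the very definition of $\mu_{n+1}$. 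The $T^*X$-degree $\geq 1$ component, obtained after decomposing the outer covariant derivative as $\nabla = \nabla^{TX} + \nabla^N$ via the splitting $T^*Y = T^*X\oplus\conormal$ and applying the partial symmetrization $\overline{\Sym}$, matches $\symmconn(i^*\nabla^n\eta)$. The $1/m$ normalization factor built into $\overline{\Sym}$ is designed for exactly this combinatorial matching.

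The main obstacle will be the careful bookkeeping of these $1/m$ factors against the factorials arising in iterated covariant derivatives. This is most transparent in a local holomorphic chart around a point $x \in X$ in which the splitting is the coordinate splitting and $\nabla$ is the Euclidean connection, so that $\eta$ admits a formal Taylor expansion $\eta = \sum_\alpha \mu_\alpha(z)w^\alpha/\alpha!$. Direct computation then shows that the $(p,q)$-component of $i^*\nabla^n\eta$ agrees termwise with $\symmconn^p\mu_q$, using the elementary identity $\binom{p}{\alpha_T}/p! = 1/\alpha_T!$ to match the multinomial coefficients from Leibniz expansion against the $(1/m)$-normalizations accumulating along $p$ applications of $\overline{\Sym}$. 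Naturality of both sides of the claimed formula under smooth changes of the splitting then extends the pointwise result to the global statement.
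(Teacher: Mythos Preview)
Your inductive setup and the key recursion
\[
  i^*\nabla^{n}\eta \;=\; \mu_{n} + \symmconn\bigl(i^*\nabla^{n-1}\eta\bigr)
\]
are exactly what the paper does. The paper, however, justifies this recursion by a direct coordinate-free symmetry argument rather than a local computation: since $i^*\nabla^{n}\eta$ is a symmetric tensor, to evaluate its component in $T^*X\cdot S^{n-1}T^*Y$ on a monomial containing $m\geq 1$ factors from $TX$, one may permute any one of those $TX$-factors into the outermost slot of $\nabla^n = \nabla\circ\nabla^{n-1}$; the outer $\nabla$ evaluated on a tangential vector then restricts to $\nabla^{TX}$ acting on $i^*\nabla^{n-1}\eta$. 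The $m$ possible choices of which $TX$-factor to promote produce exactly the overcount that the $1/m$ in $\overline{\Sym}_{m,n}$ removes. That is the whole argument.

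Your conceptual sketch (``decompose the outer covariant derivative as $\nabla=\nabla^{TX}+\nabla^{N}$\ldots'') is heading toward this, but your proposed verification in a local holomorphic chart where $\nabla$ is Euclidean has a genuine gap: such a chart does not exist unless the curvature $R=[\dbar,\nabla]$ vanishes. K\"ahler normal coordinates kill the Christoffel symbols of $\nabla$ only at the single point $x$, not to all orders, so for $n\geq 3$ the formal Taylor expansion you write down does not compute $i^*\nabla^n\eta$. The appeal to naturality under change of splitting does not help, because the obstruction lies in the connection, not in the splitting. Drop the coordinate check and make the symmetry-and-counting argument explicit; that is both correct and what the paper does.
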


\begin{proof}
  Assume that $\expmap^*_{X/Y,\Xi}([\eta]_\infty) = \mu$ where $[\eta]_\infty \in \A^\bullet(\Yhat)$. By \eqref{eq:expXY} and \eqref{eq:exp_comp_pi}, this means
  \begin{displaymath}
    \mu_k = \liftnormal^\vee (i^* \nabla^k \eta)
  \end{displaymath}
  where $\liftnormal^\vee$ is the projection
  \begin{displaymath}
    \liftnormal^\vee : \AOD_X(\Shat^\bullet(T^*Y)) \to \AOD_X(\Shat^\bullet(\conormal))
  \end{displaymath}
  as in \eqref{eq:projconormal}. Moreover, by the definition of $\widetilde{\pi}^*$ \eqref{eq:tildepi}, we have
  \begin{displaymath}
    \widetilde{\pi}^*(\mu)
      = \expmap^*_\sigma \circ ~\pi^* \circ (\expmap^*_{X/Y,\Xi})^{-1} (\mu)
      = \expmap^*_\sigma \circ ~\pi^* ([\eta]_\infty)
      = (i^* \nabla^k \eta)_{k=0}^\infty.
  \end{displaymath}
  Thus all we need to show is that
  \begin{displaymath}
    i^* \nabla^n \eta = \sum_{k=0}^n \symmconn^k (\liftnormal^\vee (i^* \nabla^{n-k} \eta))
  \end{displaymath}
  for all $n \geq 0$. We prove it by induction on $n$. The $n=0$ case is trivial. For $n \geq 1$, note that we can write
  \begin{displaymath}
    i^* \nabla^n \eta = \liftnormal^\vee (i^* \nabla^n \eta) + \text{component in $T^*X \cdot S^{n-1} T^*Y \subset S^n T^*Y$}
  \end{displaymath}
  via the decomposition \eqref{eq:symm_decomp}. The second term on the right hand side is nothing but $\symmconn (i^* \nabla^{n-1} \eta)$. To see this, consider what happens when we evaluate $i^*\nabla^n \eta$ at some section $s$ of $TY^{\otimes n}$, which lies in a mixed tensor of $m$ copies of $TX$ ($m \geq 1$) and $n-m$ copies of $N$ (of arbitrary order). One can permute any of the $TX$-factors to the first place and plug it into the first $\nabla$ in $i^*\nabla^n \eta$, since $i^*\nabla^n \eta$ is a symmetric tensor. This implies that $i^*\nabla^n \eta$ should be a symmetrization of $\nabla^{TX}(i^*\nabla^{n-1} \eta)$. The value of the latter at $s$, however, is $m$ times what we need since $s$ contains $m$ $TX$-factors. This explains the fractional factor $1/m$ in the formula \eqref{eq:sym_operator}. Finally we end the proof by applying the inductive assumption.
\end{proof}

\subsubsection{Description of the derivation $\Dnormal$}

To determine the derivation $\Dnormal$, note that by \eqref{eq:retraction} we have
\begin{displaymath}
  \Dnormal = \liftnormal^\vee \circ \widetilde{\pi}^* \circ \Dnormal = \liftnormal^\vee \circ D \circ \widetilde{\pi}^*
\end{displaymath}
where the last equality is by the definition of $\Dnormal$. Thus for given $\mu = (\mu_k)_{k=0}^\infty \in \AOD_X(\Shat^\bullet(\conormal))$, the $n$-th component of $\Dnormal\mu$ is
\begin{equation}\label{eq:Dnormal_V1}
  (\Dnormal\mu)_n = \sum_{s+t=n} \liftnormal^\vee \circ \dbar \circ \symmconn^s \mu_t + \sum_{\substack{r+s+t=n \\ r \geq 1}} \liftnormal^\vee \circ \widetilde{R}_{r+1} \circ \symmconn^s \mu_t
\end{equation}
by Proposition \ref{prop:tildepi} and \eqref{eq:deriv_Y_times_X}. To simplify the right hand side further, first observe that all we need are the components of $\symmconn^s \mu_t$ lying in $\Shat^\bullet(\conormal)$ and $T^*X \cdot \Shat^\bullet(\conormal)$ and we can ignore the remaining ones in $S^2 T^*X \cdot \Shat^\bullet(T^*Y)$. The reason is that if we apply the derivations $\dbar$ and $\widetilde{R}_n$ on any section from $S^2 T^*X \cdot \Shat^\bullet(T^*Y)$, the outcomes must lie in $T^*X \cdot \Shat^\bullet(T^*Y)$, which will then be eliminated by the projection $\liftnormal^\vee$.

Thus we first denote the projections onto the only two `effective' components respectively by
\begin{displaymath}
  P_0 = p^\vee \circ \liftnormal^\vee : \Shat^\bullet(T^*Y) \to \Shat^\bullet(\conormal) \subset \Shat^\bullet(T^*Y)
\end{displaymath}
and
\begin{displaymath}
  P_1 : \Shat^\bullet(T^*Y) \to T^*X \cdot \Shat^\bullet(\conormal) \subset \Shat^\bullet(T^*Y).
\end{displaymath}

Secondly, we define the derivation of degree $+1$
\begin{displaymath}
  \widetilde{\beta} : \AOD_X(\Shat^\bullet(T^*Y)) \to \A^{0,\bullet+1}_X(\Shat^{\bullet}(T^*Y))
\end{displaymath}
induced by
\begin{displaymath}
  \beta \in \A^{0,1}_X(\Hom(T^*X,\conormal)) \subset \A^{0,1}_X(\Hom(T^*Y,T^*Y)).
\end{displaymath}
where the last inclusion comes again from the splitting of $T^Y$. Note that $\widetilde{\beta}$ acts on $\AOD_X(\Shat^\bullet(\conormal)) \subset \AOD_X(\Shat^\bullet(T^*Y))$ as the zero map. So we can also think of $\widetilde{\beta}$ as the operator
\begin{displaymath}
  \widetilde{\beta} : \AOD_X(T^*X \cdot \Shat^\bullet(\conormal)) \to \A^{0,\bullet+1}_X(\Shat^{\bullet+1}(\conormal))
\end{displaymath}
The following lemma is immediate from \eqref{eq:beta_liftnormal}.

\begin{lemma}
  As derivations $\AOD_X(\Shat^\bullet(T^*Y)) \to \A^{0,\bullet+1}_X(\Shat^\bullet(\conormal))$,
  \begin{displaymath}
    [\liftnormal^\vee , \dbar] = \widetilde{\beta} \circ P_1
  \end{displaymath}
\end{lemma}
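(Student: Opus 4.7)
The plan is to verify the identity by a direct monomial computation using the chosen smooth splitting of the conormal sequence. Under the decomposition \eqref{eq:symm_decomp}, the operator $\liftnormal^\vee$ is the projection onto the $p=0$ summand, $P_1$ is the projection onto the $p=1$ summand, and $\widetilde\beta$ (which vanishes on $\conormal$) sends a $T^*X$-factor to its image in $\conormal$ under $\beta$, so that $\widetilde\beta$ lowers $p$ by one and raises $q$ by one. Since both sides of the stated equality are $\AOD_X$-linear, it suffices to check the identity on a typical monomial
\begin{displaymath}
  \mu = \projtan^\vee(\xi_1)\cdots\projtan^\vee(\xi_p)\cdot p^\vee(\eta_1)\cdots p^\vee(\eta_q),
\end{displaymath}
with $\xi_i$ and $\eta_j$ smooth sections of $T^*X$ and $\conormal$ respectively.

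Next I would invoke two pointwise identities. First, since $p^\vee:\conormal\hookrightarrow T^*Y$ is a holomorphic map of holomorphic bundles, $\dbar\circ p^\vee = p^\vee\circ\dbar$; in particular, $\dbar$ preserves the subalgebra $\Shat^\bullet(\conormal)\subset\Shat^\bullet(T^*Y)$. Second, by \eqref{eq:beta_liftnormal} and the Leibniz rule,
\begin{displaymath}
  \dbar(\projtan^\vee\xi) = \beta(\xi) + \projtan^\vee(\dbar\xi),
\end{displaymath}
where $\beta(\xi)\in\conormal$ and $\projtan^\vee(\dbar\xi)$ still lies in the image of $\projtan^\vee$.

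With these in hand, I would expand $\dbar\mu$ via the Leibniz rule and split into cases on $p$. When $p=0$, the first identity yields $\liftnormal^\vee\dbar\mu=\dbar\mu=\dbar\liftnormal^\vee\mu$, so $[\liftnormal^\vee,\dbar]\mu=0$, and $P_1\mu=0$ as well. When $p\geq 2$, every term of $\dbar\mu$ retains at least $p-1\geq 1$ factors in $T^*X$ and is killed by $\liftnormal^\vee$, while $P_1\mu$ again vanishes. The only nontrivial case is $p=1$: there $\liftnormal^\vee\mu=0$, and by the second identity the unique surviving contribution in $\liftnormal^\vee\dbar\mu$ comes from the $\beta(\xi_1)$-piece of $\dbar(\projtan^\vee\xi_1)$, giving $\beta(\xi_1)\cdot\eta_1\cdots\eta_q$. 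This matches $\widetilde\beta(P_1\mu)=\widetilde\beta(\mu)$ term by term.

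The main obstacle is purely bookkeeping: one must check that in the $p=1$ case every term of $\liftnormal^\vee\dbar\mu$ not coming from $\beta(\xi_1)$ --- namely those in which $\dbar$ hits some $\eta_j$, and the $\projtan^\vee(\dbar\xi_1)$-correction --- still carries a $T^*X$-factor and is therefore annihilated by $\liftnormal^\vee$. Once this is verified, the identity follows immediately from the two pointwise identities.
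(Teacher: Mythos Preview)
Your proof is correct and is essentially a detailed unpacking of what the paper has in mind: the paper simply declares the lemma ``immediate from \eqref{eq:beta_liftnormal}'', i.e.\ from $\beta=-\dbar\liftnormal^\vee=\dbar\projtan^\vee$, and your monomial-by-monomial check is exactly the verification of that claim. One small streamlining: since both $[\liftnormal^\vee,\dbar]$ and $\widetilde\beta\circ P_1$ are $\liftnormal^\vee$-twisted derivations into $\Shat^\bullet(\conormal)$, it would in fact suffice to check only on the generators $T^*Y=\projtan^\vee(T^*X)\oplus p^\vee(\conormal)$, which is precisely your cases $p+q=1$; the general monomial cases $p\geq 2$ then follow automatically.
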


Hence the first sum on the right hand side of \eqref{eq:Dnormal_V1} can be rewritten as
\begin{equation}\label{eq:Dnormal_V1_1}
  \begin{split}
    \sum_{s+t=n} \liftnormal^\vee \circ \dbar \circ \symmconn^s \mu_t
    &= \sum_{s+t=n} \dbar \circ \liftnormal^\vee \circ \symmconn^s \mu_t + \sum_{s+t=n} \widetilde{\beta}_X \circ P_1 \circ \symmconn^s \mu_t  \\
    &= \dbar \mu_n + \sum_{s+t=n} \widetilde{\beta}_X \circ P_1 \circ \symmconn^s \mu_t
  \end{split}
\end{equation}
The last equality is because that $\symmconn^s \mu_t \in \AOD_X(T^*X \cdot \Shat^\bullet(T^*Y))$ unless $s=0$.

Thirdly, we split $R_n \in \A^{0,1}_X(\Hom(T^*Y, S^n T^*Y))$ into two components,
\begin{displaymath}
  R^\bot_n := \liftnormal^\vee \circ R_n \circ p^\vee \in \A^{0,1}_X(\Hom(\conormal, S^n \conormal))
\end{displaymath}
and
\begin{displaymath}
  R^\top_n := \liftnormal^\vee \circ R_n \circ \projtan^\vee \in \A^{0,1}_X(\Hom(T^*X, S^n \conormal))
\end{displaymath}
and denote the induced operators by
\begin{displaymath}
  \widetilde{R}^\bot_n : \AOD_X(\Shat^\bullet(\conormal)) \to \A^{0,\bullet+1}_X(\Shat^{\bullet+n-1}(\conormal))
\end{displaymath}
and
\begin{displaymath}
  \widetilde{R}^\top_n : \AOD_X(T^*X \cdot \Shat^\bullet(\conormal)) \to \A^{0,\bullet+1}_X(\Shat^{\bullet+n}(\conormal))
\end{displaymath}
respectively. To unify notations, we also write $R^\top_1 := \beta$ and $\widetilde{R}^\top_1 := \widetilde{\beta}$.

We can then split the second term on the right hand side of \eqref{eq:Dnormal_V1}:
\begin{equation}\label{eq:Dnormal_V1_2}
  \begin{split}
    \quad  &\sum_{\substack{r+s+t=n \\ r \geq 1}} \liftnormal^\vee \circ \widetilde{R}_{r+1} \circ \symmconn^s \mu_t  \\
    =      &\sum_{\substack{r+s+t=n \\ r \geq 1}} \liftnormal^\vee \circ \widetilde{R}_{r+1} \circ P_0 \circ \symmconn^s \mu_t
           + \sum_{\substack{r+s+t=n \\ r \geq 1}} \liftnormal^\vee \circ \widetilde{R}_{r+1} \circ P_1 \circ \symmconn^s \mu_t  \\
    =      &\sum_{\substack{r+s+t=n \\ r \geq 1}} \widetilde{R}^\bot_{r+1} \circ P_0 \circ \symmconn^s \mu_t
           + \sum_{\substack{r+s+t=n \\ r \geq 1}} \widetilde{R}^\top_{r+1} \circ P_1 \circ \symmconn^s \mu_t  \\
    =      &\sum_{k=2}^n \widetilde{R}^\bot_k \circ \mu_{n-k+1}
           + \sum_{\substack{r+s+t=n \\ r \geq 1}} \widetilde{R}^\top_{r+1} \circ P_1 \circ \symmconn^s \mu_t
  \end{split}
\end{equation}
Combine \eqref{eq:Dnormal_V1}, \eqref{eq:Dnormal_V1_1} and \eqref{eq:Dnormal_V1_2} we get
\begin{equation}\label{eq:Dnormal_V2}
  (\Dnormal \mu)_n
    = \dbar \mu_n
      + \sum_{k=2}^n \widetilde{R}^\bot_k \circ \mu_{n-k+1}
      + \sum_{\substack{r+s+t=n \\ r,t \geq 0, ~ s \geq 1}} \widetilde{R}^\top_{r+1} \circ P_1 \circ \symmconn^s \mu_t
\end{equation}

Finally, to compute the terms $P_1 \circ \symmconn^s \mu_t$, we define two derivations of degree $0$ with respect to the grading from $\AOD_X$:
\begin{displaymath}
  \normalconnbar : \AOD_X(\Shat^\bullet(\conormal)) \to \AOD_X(T^*X \cdot \Shat^\bullet(\conormal))
\end{displaymath}
induced by the connection $\normalconn$ on $\conormal$ the same way as we define $\symmconn$ in \eqref{eq:defn_symmconn}. $\normalconn f$ of a function $f$ is again understood as $\partial f$, the $(1,0)$ differential.

The second one
\begin{displaymath}
  \widetilde{\shape} : \AOD_X(T^*X \cdot \Shat^\bullet(\conormal)) \to \AOD_X(T^*X \cdot \Shat^{\bullet+1}(\conormal))
\end{displaymath}
induced by the shape operator $\shape : T^*X \to T^*X \otimes \conormal$ as in \eqref{eq:shape_operator} yet again with the images symmetrized.

\begin{lemma}
  With the notations above, we have
  \begin{displaymath}
    P_1 \circ \symmconn = \normalconnbar \circ P_0 + \widetilde{\shape} \circ P_1,
  \end{displaymath}
  thus $\forall~s \geq 1$, $\forall~\mu \in \Shat^\bullet(\conormal)$,
  \begin{equation}\label{eq:symmconn_power}
  P_1 \circ \symmconn^s \mu = (\widetilde{\shape})^{s-1} \circ \normalconnbar \mu.
  \end{equation}
\end{lemma}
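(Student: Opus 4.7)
The plan is to prove the pointwise identity $P_1 \circ \symmconn = \normalconnbar \circ P_0 + \widetilde{\shape} \circ P_1$ directly, by decomposing $\AOD_X(\Shat^\bullet T^*Y)$ according to the splitting
\begin{align*}
\AOD_X(\Shat^\bullet T^*Y) = \AOD_X(\Shat^\bullet \conormal) &\oplus \AOD_X(T^*X \cdot \Shat^\bullet \conormal) \\
&\oplus \AOD_X(S^{\geq 2}T^*X \cdot \Shat^\bullet \conormal),
\end{align*}
checking the identity on each summand, and then deducing \eqref{eq:symmconn_power} by induction on $s$.

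The first useful observation is that $\symmconn = \overline{\Sym} \circ \nabla^{TX}$ always injects a fresh $T^*X$-factor coming from the derivative slot of $\nabla^{TX}$, which $\overline{\Sym}$ turns into a symmetric multiplier. Consequently the image of $\symmconn$ lies in $\AOD_X(T^*X \cdot \Shat^\bullet T^*Y)$, so $P_0 \circ \symmconn = 0$. Granting the degree-one identity, this observation feeds the induction for \eqref{eq:symmconn_power}: the base case $s=1$ reads $P_1 \symmconn \mu = \normalconnbar(P_0 \mu) + \widetilde{\shape}(P_1 \mu) = \normalconnbar \mu$ since $\mu \in \Shat^\bullet \conormal$ satisfies $P_0 \mu = \mu$ and $P_1 \mu = 0$, and for $s \geq 2$ one has $P_0(\symmconn^{s-1}\mu) = 0$, whence
\begin{align*}
P_1 \symmconn^s \mu &= \normalconnbar(P_0 \symmconn^{s-1}\mu) + \widetilde{\shape}(P_1 \symmconn^{s-1}\mu) \\
&= \widetilde{\shape}\bigl((\widetilde{\shape})^{s-2}\normalconnbar \mu\bigr) = (\widetilde{\shape})^{s-1}\normalconnbar \mu.
\end{align*}

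Two of the three cases for the degree-one identity are bookkeeping. On $\mu \in \AOD_X(\Shat^\bullet \conormal)$, Leibniz-expanding $\nabla^{TX} \mu = \sum_j \nabla\nu_j \cdot \nu_1 \cdots \widehat{\nu_j} \cdots \nu_n$ and splitting each $\nabla \nu_j \in T^*X \otimes (T^*X \oplus \conormal)$ via the chosen decomposition shows that only the $T^*X \otimes \conormal$-piece can land in $T^*X \cdot \Shat^\bullet \conormal$ after $\overline{\Sym}$; that piece is by definition $\normalconn \nu_j$, and the relevant $\overline{\Sym}$-coefficient is $1/m = 1$, so the $j$-sum assembles into $\normalconnbar \mu$. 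On $\xi \in \AOD_X(S^{\geq 2}T^*X \cdot \Shat^\bullet \conormal)$, every Leibniz-summand carries at least two pre-existing $T^*X$-factors together with the fresh derivative $T^*X$, so the total $T^*X$-degree of $\symmconn \xi$ is at least two and $P_1 \symmconn \xi = 0$.

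The technical heart is the middle summand, $\xi = \tau \cdot \mu$ with $\tau \in T^*X$ and $\mu \in \Shat^\bullet \conormal$. The Leibniz-pieces $\tau \cdot \nabla \nu_j$ all end up with $T^*X$-degree at least two for the same counting reason as above, so only $(\nabla \tau) \cdot \mu$ can survive into $P_1$, and within that only the $T^*X \otimes \conormal$-component $(\nabla \tau)_\conormal$. The main obstacle I anticipate is the identification $(\nabla \tau)_\conormal = \shape(\tau)$, where $\shape : T^*X \to T^*X \otimes \conormal$ is the dual of the shape operator from \eqref{eq:shape_operator}. I plan to verify it by the pairing computation
$$\langle \nabla_V (\projtan^\vee \tau), \liftnormal(\mu)\rangle = -\langle \tau, \projtan(\nabla_V \liftnormal(\mu))\rangle = \langle \tau, \shape^\mu(V)\rangle,$$
using $\projtan \circ \liftnormal = 0$ and the Leibniz rule to kill the boundary term, and then reading off $\liftnormal^\vee \nabla (\projtan^\vee \tau) = \shape(\tau)$ from the definition of the adjoint $\liftnormal^\vee$. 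Once this identification is in hand, the surviving $P_1$-contribution $\overline{\Sym}(\shape(\tau) \cdot \mu)$ matches $\widetilde{\shape}(\tau \cdot \mu)$ on the nose (both keep the derivative $T^*X$ outside and multiply the $\conormal$-factor symmetrically into $\mu$, with coefficient $1$), completing the first identity and hence, via the induction above, \eqref{eq:symmconn_power}.
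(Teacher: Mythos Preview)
Your proof is correct. The paper states this lemma without proof, so there is nothing to compare against; your argument---splitting $\AOD_X(\Shat^\bullet T^*Y)$ by $T^*X$-degree, identifying $\liftnormal^\vee\nabla(\projtan^\vee\tau)=\shape(\tau)$ via the pairing trick using $\projtan\circ\liftnormal=0$, and then running the induction on $s$ using $P_0\circ\symmconn=0$---supplies exactly the verification the paper omits.
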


Applying the equality \eqref{eq:symmconn_power} to \eqref{eq:Dnormal_V2}, we eventually get

\begin{theorem}\label{thm:MAIN_derivation}
  Given $\mu = (\mu_k)_{k=0}^\infty \in \AOD_X(\Shat^\bullet(\conormal))$, the $n$-th component of $\Dnormal\mu$ is
  \begin{displaymath}
    (\Dnormal \mu)_n
      = \dbar \mu_n
        + \sum_{k=2}^n \widetilde{R}^\bot_k \circ \mu_{n-k+1}
        + \sum_{\substack{r+s+t=n \\ r,t \geq 0, ~ s \geq 1}} \widetilde{R}^\top_{r+1} \circ \widetilde{\shape}^{s-1} \circ \normalconnbar \mu_t.
  \end{displaymath}
  In other words,
  \begin{equation}\label{eq:Dnormal_Final}
    \Dnormal = \dbar + \sum_{k \geq 2} \widetilde{R}^\bot_k + \sum_{p \geq 1,~ q \geq 0} \widetilde{R}^\top_p \circ \widetilde{\shape}^q \circ \normalconnbar.
  \end{equation}
\end{theorem}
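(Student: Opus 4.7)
The plan is to assemble the ingredients already laid out in \S\ref{subsec:Taylor_normal}. Starting from the relation $\Dnormal = \liftnormal^\vee \circ D_\sigma \circ \widetilde{\pi}^*$, which follows from \eqref{eq:retraction} combined with the definition $\Dnormal = \expmap^*_{X/Y,\Xi} \circ \dbar \circ (\expmap^*_{X/Y,\Xi})^{-1}$, I would expand $D_\sigma = \dbar + \sum_{n \geq 2} \widetilde{R}_n$ via \eqref{eq:deriv_Y_times_X} and use Proposition \ref{prop:tildepi} to rewrite $\widetilde{\pi}^* = \sum_{s \geq 0} \symmconn^s$. This yields the preliminary expression \eqref{eq:Dnormal_V1} for $(\Dnormal \mu)_n$, each summand being a composition of $\liftnormal^\vee$, a differential, and a power of $\symmconn$.

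Next I would reduce to only those components of $\symmconn^s \mu$ that survive the final $\liftnormal^\vee$. Because $\dbar$ and each $\widetilde{R}_n$ raise the $T^*X$-degree by at most one while $\liftnormal^\vee$ annihilates anything of $T^*X$-degree $\geq 1$, only the projections $P_0 \symmconn^s \mu \in \Shat(\conormal)$ and $P_1 \symmconn^s \mu \in T^*X \cdot \Shat(\conormal)$ contribute. The commutator identity $[\liftnormal^\vee, \dbar] = \widetilde{\beta} \circ P_1$ is immediate from \eqref{eq:beta_liftnormal}, and the splitting $R_n = R_n^\bot + R_n^\top$ distributes the curvature terms naturally. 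Adopting the convention $R_1^\top := \beta$, so that $\widetilde{R}_1^\top = \widetilde{\beta}$, absorbs the Kodaira-Spencer contribution coming from the $\dbar$-commutator into the same unified sum and produces formula \eqref{eq:Dnormal_V2}.

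The only nontrivial remaining step is the auxiliary identity
\[ P_1 \circ \symmconn = \normalconnbar \circ P_0 + \widetilde{\shape} \circ P_1, \]
from which the iterated equality $P_1 \circ \symmconn^s \mu = \widetilde{\shape}^{s-1} \circ \normalconnbar \mu$ on $\AOD_X(\Shat(\conormal))$ follows by induction: $\symmconn$ cannot lower the $T^*X$-degree, so $P_1 \symmconn$ vanishes on sections of $T^*X$-degree $\geq 2$, and $P_0 \symmconn$ vanishes on sections of $T^*X$-degree zero. To establish the identity itself, I would evaluate $\symmconn \omega = \overline{\Sym} \circ \nabla^{TX}\omega$ on an elementary section $\omega \in S^p T^*X \cdot S^q \conormal$ via the Leibniz rule, splitting each $\nabla v$ for $v \in T^*X$ and $\nabla \mu$ for $\mu \in \conormal$ into its $T^*X$- and $\conormal$-components. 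The key computational inputs are $\liftnormal^\vee(\nabla v) = \shape^*(v)$, which follows from \eqref{eq:shape_operator} by duality, and $\liftnormal^\vee(\nabla p^\vee \mu) = \normalconn \mu$, the defining property of the induced normal connection. The rational weight $1/m$ in $\overline{\Sym}_{m,n}$ is precisely what cancels the multiplicity that arises when the new outer $T^*X$-factor produced by $\nabla$ is symmetrized with the $m-1$ existing inner $T^*X$-factors, so that the identification is clean without any combinatorial correction.

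Substituting the iterated formula into the last sum of \eqref{eq:Dnormal_V2} collapses it into $\sum_{p \geq 1, q \geq 0} \widetilde{R}_p^\top \circ \widetilde{\shape}^q \circ \normalconnbar$ applied componentwise to $\mu$, which, combined with $\dbar \mu_n$ and $\sum_{k \geq 2} \widetilde{R}_k^\bot \mu_{n-k+1}$, gives the claimed formula. The main obstacle throughout is the careful combinatorial bookkeeping in the auxiliary identity; fortunately the normalization built into $\overline{\Sym}$ is designed precisely to keep this transparent.
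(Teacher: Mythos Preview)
Your proposal is correct and follows essentially the same route as the paper: derive $\Dnormal = \liftnormal^\vee \circ D_\sigma \circ \widetilde{\pi}^*$ from \eqref{eq:retraction}, expand via Proposition~\ref{prop:tildepi} and \eqref{eq:deriv_Y_times_X} to obtain \eqref{eq:Dnormal_V1}, reduce to the $P_0$ and $P_1$ components, apply the commutator lemma and the $R^\bot/R^\top$ splitting to reach \eqref{eq:Dnormal_V2}, and then invoke the identity $P_1 \circ \symmconn = \normalconnbar \circ P_0 + \widetilde{\shape} \circ P_1$ and its iterate \eqref{eq:symmconn_power}. You in fact supply more detail than the paper does on why the two auxiliary lemmas hold, which the paper simply states.
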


\begin{remark}
  From Theorem \ref{thm:MAIN_derivation} we see that, even when $\Dnormal$ acts on a function $f$ (or a form), higher term would be produced in general. Namely, by \eqref{eq:Dnormal_Final}
  \begin{equation}\label{eq:Dnormal_zero}
    \Dnormal f = \dbar f + \sum_{p \geq 1,~ q \geq 0} \widetilde{R}^\top_p \circ \widetilde{\shape}^q \circ \partial f
  \end{equation}
  This is a huge difference between the general situation and the case of diagonal embedding.
\end{remark}

\begin{remark}
  Although we get $\Dnormal^2 = 0$ for free from how we construct it, it is still an interesting (yet tedious) exercise to verify it by hands and one will observe the Gauss-Codazzi-Ricci equations in classical differential geometry (see \cite{Xin}). We leave the details to interested readers.
\end{remark}

\section{$L_\infty$-algebroid of the formal neighborhood}\label{sec:sh-LieAlgebroid}

In this section we repackage the results in \S~\ref{sec:General_Case} in terms of a $L_\infty$-algebroid structure on the shifted cotangent bundle $N[-1]$, or more precisely, on the complex $\AOD_X(\normal[-1]) = \A^{0,\bullet-1}_X(\normal)$, whose Chevalley-Eilenberg complex is exactly the Dolbeault dga $\A^\bullet(Y\formal_X)$. However, to keep the signs simple, we will work with the \Linfs-algebroid structure on the unshifted normal bundle rather than the \Linf-algebroid. Since throughout this section the background algebra is the Dolbeault dga $(\AOD(X),\dbar)$ of the submanifold $X$, we will just write it as $\AOD$.


\subsection{Conventions and notations}

We follow the notations and sign conventions in \cite{Luca}. For any postitive integers $k_1, \ldots, k_l$, let $\Sh(k_1,\ldots k_l)$ be the set of \emph{$(k_1, \ldots, k_l)$-unshuffles}, i.e., permutations $\sigma$ of set of integers $\{1, 2, \ldots, k_1+k_2 + \cdots + k_l\}$ satisfying
  \[ \sigma(k_1 + \cdots + k_{i-1} + m)  < \sigma(k_1 + \cdots + k_{i-1} + n), \quad \forall ~ 1 \leq m < n \leq k_i, ~ i = 1, \ldots, l.\]

Suppose $V = \oplus_i V^i$ is a graded vector space over a field $\field$ of zero characteristic. Given a list of homogeneous vectors in $\mathbf{v} = (v_1, \ldots, v_n)$ in $V$
and a permutation $\sigma \in S_n$, we denote by $\alpha(\sigma, \mathbf{v})$ (resp., $\chi(\sigma, \mathbf{v})$) the sign determined by 
  \[ v_{\sigma(1)} \odot \cdots \odot v_{\sigma(n)} = \alpha(\sigma,\mathbf{v}) v_1 \odot \cdots \odot v_n \quad (\text{resp.}~ v_{\sigma(1)} \wedge \cdots \wedge v_{\sigma(n)} = \chi(\sigma,\mathbf{v}) v_1 \wedge \cdots \wedge v_n) \]
where $\odot$ (resp., $\wedge$) is the graded symmetric (resp., graded skew-symmetric) product in the graded symmetric algebra $S(V)$ (resp., graded exterior algebra $\wedge V$).

\subsection{\Linf- and \Linfs-algebroids}
We recall the definition of \Linf-algebras and \Linfs-algebras (\cite{Luca}).

\begin{definition}\label{defn:shLie}
  An \Linf-algebra is a graded vector space $L^\bullet$ equipped with a family of $n$-ary multilinear operations ($n$-brackets)
  \begin{displaymath}
        l_n : (L^\bullet)^{\times n} \to L^{\bullet},\qquad (x_1 , \cdots , x_n) \mapsto [x_1,\ldots,x_n]_n, \quad n \in \nat
  \end{displaymath}
  of degree $2-n$, such that
  \begin{enumerate}
    \item
      $[\cdot,\ldots,\cdot]_n$ is graded skew-symmetric, i.e., for any permutation $\sigma \in S_n$ and vector $\mathbf{v}=(v_1, \ldots, v_n)$ of homogeneous elements in $L^\bullet$,
      \[ [v_{\sigma(1)}, \ldots, v_{\sigma(n)} ]_n = \chi(\sigma,\mathbf{v}) [v_1, \ldots, v_n]_n ,\]
      and
   \item
     the higher Jacobi identity is satisfied:
     \begin{equation}\label{eq:Jacobi}
         \sum_{i+j=n}\sum_{\sigma \in \Sh(i,j)} (-1)^{ij} \chi(\sigma, \mathbf{v}) [ [ v_{\sigma(1)}, \ldots v_{\sigma(i)}]_i, v_{\sigma(i+1)}, \ldots, v_{\sigma(n)}]_{j+1}= 0,
      \end{equation}
     for any $n \in \nat$. In particular, $(L^\bullet, d=[ ~\cdot~ ]_1)$ is a cochain complex.
      
  \end{enumerate}
\end{definition}


\begin{definition}\label{defn:shLie}
  An \Linfs-algebra is a graded vector space $L^\bullet$ equipped with a family of $n$-ary multilinear operations ($n$-brackets)
      \[  \ell_n : (L^\bullet)^{\times n} \to L^{\bullet},\qquad (x_1 , \cdots , x_n) \mapsto \{x_1,\ldots,x_n\}_n, \quad n \in \nat  \]
  of degree $1$, such that
  \begin{enumerate}
    \item
      $\{ \cdot,\ldots,\cdot \}_n$ is graded symmetric, i.e., for any permutation $\sigma \in S_n$ and homogeneous vectors $\mathbf{v}=(v_1, \ldots, v_n)$ in $L^\bullet$,
      \[ \{ v_{\sigma(1)}, \ldots, v_{\sigma(n)} \}_n = \alpha(\sigma,\mathbf{v}) \{ v_1, \ldots, v_n \}_n ,\]
      and
   \item
     the higher Jacobi identity is satisfied:
     \begin{equation}\label{eq:Jacobi}
         \sum_{i+j=n}\sum_{\sigma \in \Sh(i,j)} \alpha(\sigma, \mathbf{v}) \{ \{ v_{\sigma(1)}, \ldots v_{\sigma(i)} \}_i, v_{\sigma(i+1)}, \ldots, v_{\sigma(n)} \}_{j+1}= 0,
      \end{equation}
     for any $n \in \nat$. In particular, $(L^\bullet, d=\{ ~\cdot~ \}_1)$ is a cochain complex.
  \end{enumerate}
  In the case when $l_n=0$ for all $n \geq 2$, we call $L^\bullet$ as a \emph{shifted differential graded Lie algebra} or simply a \emph{shifted DGLA}. 
\end{definition}
There is a one-to-one correspondence between \Linf-algebra structures $\{ [\cdot, \cdots, \cdot]_n | ~n \in \nat \}$ on a graded vector space $L$, and \Linfs-algebra structures $\{ \{\cdot, \cdots, \cdot\}_n |~n \in \nat \}$ on $L[1]$, the shifted graded vector space of $L$, given by
  \[ \{ v_1, \ldots, v_n  \} = (-1)^{(k-1)|v_1| + (k-2)|v_2|+ \cdots + |v_{k-1}|} [v_1, \cdots, v_n ], ~ \forall ~v_1, \cdots, v_n \in L, ~ \forall ~ k \in \nat.  \]

\begin{definition}
  A morphism $f: L \to L'$ between the \Linfs-algebras $(L, \{ \cdot, \cdots, \cdot \}_k)$ and $(L', \{ \cdot, \cdots, \cdot \}'_k)$ is a collection $f= ( f_n, ~ n \in \nat )$ of $n$-ary, multilinear, graded symmetric maps of degree $0$,
  \[ f_n: L^{\times n} \to L', ~ n \in \nat, \]
  such that
  \begin{equation} 
    \begin{split}
       & \sum_{i+j=n} \sum_{\sigma \in S_{i,j}} \alpha(\sigma, \mathbf{v}) f_{i+j+1}(\{ v_{\sigma(1)}, \ldots, v_{\sigma(i)} \}, v_{\sigma(i+1)}, \ldots, v_{\sigma(i+j)} ) \\
      = & \sum_{l=1}^k \sum_{\substack{n_1 + \cdots + n_l = n \\ 1 \leq n_1 \leq \cdots \leq n_l}} \sum_{\sigma \in \Sh(n_1, \ldots, n_l)} \alpha(\sigma, \mathbf{v}) \{ f_{n_1}(v_{\sigma(1)}, \ldots, v_{\sigma(n_1)}), \ldots, f_{k_l}(v_{\sigma(k-k_l+1)}, \ldots, v_{\sigma(k)}) \}',
    \end{split}
  \end{equation}
  for any $\mathbf{v} \in L^{\times k}$.
\end{definition}

\begin{definition}\label{defn:shLieAlgebroid}
  An \emph{\Linfs-algebroid} or a \emph{strong homotopy (SH) Lie-Rinehart algebra (\LRs-algebra}) is a pair $(\LD, \AD)$, where $\AD$ is a unital graded commutative $\field$-algebra and $\LD$ is an \Linfs-algebra with \textbf{$\field$-multilinear} $n$-brackets $\{ \cdot, \cdots, \cdot \}_n$, $n \in \nat$, which also possesses the structure of a graded $\A^\bullet$-module. Moreover, there is a family of $n$-ary $\field$-multilinear operations 
         \[\{ \cdot, \cdots, \cdot | \dash \}_n: L^{\times (n-1)} \times \A^\bullet \to \A^\bullet, ~ n \in \nat \]
      of degree $1$ such that
        \begin{enumerate}
           \item
             Each $\{ \cdot, \cdots, \cdot | \dash \}_n$ is $\A^\bullet$-multilinear subject to the Koszul sign rules and graded symmetric in the first $n-1$ entries and a derivation in the last entry. When $n=1$, the operation $\{| \dash \}_1 : \A^\bullet \to \A^\bullet$ does not depend on $L^\bullet$ and determines a derivation $d_A$ on $\A^\bullet$ of degree one. Hence $A=(\A^\bullet, d_A)$ is a unital commutative dga. When $n \geq 1$, the induced maps (of degree 0)
             \begin{equation}\label{eq:der2anchor}
               \alpha_n : L^{\times n} \to \Der^\bullet(A)[1], ~ (v_1, \ldots, v_n) \mapsto \{ v_1, \ldots, v_n | \dash \}_{n+1},
             \end{equation}
             form a morphism $\alpha$ of \Linfs-algebras between $\LD$ and shifted DGLA $\Der^\bullet(A)[1]$ of derivations of $A$. We call $\alpha$ as the \emph{$\infty$-anchor map} (or simply \emph{anchor map}) and $\alpha_n$ the $n$th anchor map of the \Linfs-algebroid $L$. Moreover, $\alpha_n$ is $A$-multilinear. 
           \item
             The brackets of $L$ and the anchor map satisfies the equality
             \begin{equation}
       \{v_1,\ldots,v_{n-1}, a \cdot v_n \}_n = \{v_1,\ldots,v_{n-1} | a \}_n \cdot v_n + (-1)^{|a|(|v_1|+\cdots + |v_{n-1}| +1 )} a \cdot \{ v_1,\ldots,v_n \}_n
     \end{equation}
       for any $n \in \nat$. Note that when $n=1$, this means $(L, d_L = \{ \cdot \}_1)$ is a dg-module over the dga $A=(A^\bullet, d_A)$.
        \end{enumerate}

\begin{remark}
  Unlike \cite{Luca}, we always write explicitly the underlying dga $A=(\A^\bullet,d_A)$ and we also call $(L^\bullet,d_L)$ as an \Linfs-algebroid over the dga $A$ if the operation $\{| - \}_1$ equals $d_A$. 
\end{remark}        
        
We now describe the (completed) Chevalley-Eilenberg dga of an \Linfs-algebroid. Instead of the multi-differential algebra structure on the uncompleted symmetric algebra used in \cite{Luca}, we use the completed symmetric algebra. The difference is minor.

Let  $\AD$ be a unital graded commutative $\field$-algebra and $\LD$ a graded $\AD$-module. Let $S^r_{\A}(L,\A)$ be the graded $\AD$ module of graded symmetric, $\AD$-multilinear maps with $r$ entries. A homogeneous element $\eta \in S^r_{\A}(L,\A)$ is a homogeneous, graded symmetric, $\field$-multilinear map 
  \[ \eta: (L^\bullet)^{\times r} \to \AD \] 
such that
  \[ \eta(a v_1, v_2, \ldots, v_r) = (-1)^{|a| |\eta|} a \eta(v_1, v_2, \ldots, v_r), ~ a \in \AD, v_1, \ldots, v_r \in \LD. \]
In particular, $S^0_{\A}(L, \A) = \AD$ and $S^1_{\A}(L, \A)= L^{\vee \bullet} :=\Hom_{\AD}(\LD,\AD)$. Define the completed symmetric algebra
  \[ \Shat_{\A}(L, \A) = \prod_{r \geq 0} S_{\A}^r (L, \A),  \]
where the product of two homogenous elements $\eta \in S^r_{\A} (L,\A)$ and $\eta' \in S^{r'}_{\AD} (\LD, \A)$ is given by the formula
  \[ (\eta \eta')(v_1, \ldots, v_{r+r'}) = \sum_{\sigma \in S_{r, r'}} (-1)^{|\eta'| ( |v_{\sigma(1)}| + \cdots + v_{\sigma(r)} )} \alpha(\sigma, \mathbf{v}) \eta (v_{\sigma(1)}, \ldots, v_{\sigma(r)}) \eta'(v_{\sigma(r+1)}, \ldots, v_{\sigma(r+r')}),  \]
for any $\mathbf{v}=(v_1, \ldots, v_{r+r'}) \in (\LD)^{\times (r+r')}$.  $\Shat_{\A} (L,\A)$ is a graded commutative unital algebra.

\begin{remark}
  Suppose $\LD = \AD \otimes_{\A^0} L^0$, where $\A^0$ and $L^0$ are the zeroth component of $\AD$ and $\LD$ respectively, and $L^0$ is a projective and finitely generated $A^0$-module. Then $\Shat_{\A} (L,\A) \simeq \AD \ctensor_{\A^0} \Shat_{\A^0}((L^0)^\vee)$ as $\AD$-modules, where $\ctensor$ is the complete tensor product with respect to the projective topology on $\Shat_{\A^0}((L^0)^\vee)$.
\end{remark}

The following theorem is an analogue of Theorem 12, \cite{Luca}.

\begin{theorem}\label{thm:Koszul}
  Let $\AD$ be an graded commutative unital $\field$-algebra and $\LD$ be a projective and finitely generated $\AD$-module. Then an \LRs-algebra structure on $(\LD, \AD)$ is equivalent to a degree one derivation on $\ShatL$.
\end{theorem}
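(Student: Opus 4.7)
The plan is to establish the equivalence by decomposing a degree one derivation $D$ of $\ShatL$ along the symmetric-degree filtration, reading off the $n$-ary brackets and anchor operations from its restrictions to the generating subspaces $\AD$ and $L^{\vee\bullet}$, and interpreting the codifferential condition $D^2 = 0$ as the higher Jacobi axiom \eqref{eq:Jacobi} together with the morphism condition for the anchor.

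First I would exploit that $\LD$ is finitely generated projective over $\AD$: this implies $\ShatL$ is generated as a graded $\field$-algebra by $\AD$ and $L^{\vee\bullet} = S^1_{\A}(L,\A)$, so a degree one derivation $D$ is determined by a compatible pair of $\field$-linear degree one maps
\begin{displaymath}
  \delta_A = D|_\AD : \AD \to \ShatL, \qquad \delta_L = D|_{L^{\vee\bullet}} : L^{\vee\bullet} \to \ShatL,
\end{displaymath}
the compatibility being that $\delta_A$ is a derivation of $\AD$ valued in $\ShatL$, together with the mixed Leibniz rule $D(a\phi) = \delta_A(a)\phi + (-1)^{|a|} a\, \delta_L(\phi)$ for $a \in \AD$, $\phi \in L^{\vee\bullet}$.

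Next, decomposing $\delta_A$ and $\delta_L$ by symmetric degree gives components $\delta_A^{(n)} : \AD \to S^n_{\A}(L,\A)$ and $\delta_L^{(n)} : L^{\vee\bullet} \to S^n_{\A}(L,\A)$ for $n \geq 0$. By $\AD$-multilinearity of the target and the derivation property in the source, each $\delta_A^{(n)}$ is equivalent to a graded symmetric $\AD$-multilinear map $(\LD)^{\times n} \to \Der^\bullet(A)[1]$; these assemble into the anchor maps $\alpha_n$ of \eqref{eq:der2anchor}, with $\delta_A^{(0)} = d_A$. For $\delta_L^{(n)}$, the perfect pairing $L^{\vee\bullet} \otimes_\AD \LD \to \AD$ coming from finite generation and projectivity lets me extract an $\AD$-multilinear graded symmetric $n$-bracket $\{-,\ldots,-\}_n : (\LD)^{\times n} \to \LD$ of degree one as the leading symbol of $\delta_L^{(n)}$, with the anchor accounting for the residual $\AD$-nonlinearity through the mixed Leibniz rule. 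A careful sign check then shows that this Leibniz rule is precisely equivalent to the module compatibility in part~(2) of Definition~\ref{defn:shLieAlgebroid}, so that the assignment $D \leftrightarrow (\{-,\ldots,-\}_n, \alpha_n)$ is a bijection at the level of data.

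Finally, since $D^2$ is itself a derivation of degree two, the condition $D^2 = 0$ is equivalent to its vanishing on the generators. Collecting the symmetric-degree components of $D^2(\phi) = 0$ for $\phi \in L^{\vee\bullet}$ and pairing against tuples $(v_1, \ldots, v_n)$ produces the higher Jacobi identity \eqref{eq:Jacobi} on the brackets; collecting those of $D^2(a) = 0$ for $a \in \AD$ yields the equations witnessing that $\alpha = (\alpha_n)_{n \in \nat}$ is an \Linfs-morphism into $\Der^\bullet(A)[1]$. The main obstacle will be disentangling the Koszul signs through the identifications $S^n_{\A}(L,\A) \simeq S^n_{\A}(L^{\vee\bullet})$, the duality $\Hom_{\AD}(L^{\vee\bullet},\AD) \simeq \LD$, and the shuffle rearrangement of the $D^2$ expansion, so that the signs $\alpha(\sigma,\mathbf{v})$ in \eqref{eq:Jacobi} come out correctly. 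The overall structure is parallel to Theorem~12 of \cite{Luca} in the uncompleted setting; passing to the completion $\ShatL$ causes no issue because both the derivation property and the Jacobi identity are checked at each symmetric degree separately.
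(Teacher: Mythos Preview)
Your proposal is correct and follows essentially the same approach as the paper: both decompose the derivation by symmetric degree, restrict to the generators $\AD$ and $L^{\vee\bullet}$ using projectivity and finite generation, read off the anchors and brackets via the explicit formulas (the paper's \eqref{eq:CE2der} and \eqref{eq:CE2bracket}), and defer the sign bookkeeping and the verification that $D^2=0$ is equivalent to the higher Jacobi identities to Theorem~12 and Appendix~A of \cite{Luca}. The paper's own argument is likewise only a sketch, so your level of detail is appropriate.
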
     

\begin{proof}[Sketch of proof]
We only recall the construction of the derivation from \cite{Luca}, which will be used later. For the detailed proof see the Appendix A of \cite{Luca}.

Since $\LD$ is projective and finitely generated, $\ShatL \simeq \Shat_{\A} (L^{\vee})$, the completed symmetric algebra generated by the $\AD$-module $L^{\vee \bullet}$, and any derivation is hence determined by its action on $\AD$ and $L^{\vee \bullet}$. Define degree one derivation $D_n$ on $\ShatL$ as follows. For any $a \in \AD$, set
  \[  (D_n a) (v_1, \ldots, v_n) := (-1)^{|a| (|v_1| + \cdots + |v_n|)} \{ v_1, \ldots, v_n | a \}_{n+1}, ~ \forall ~ v_1, \ldots, v_n \in \LD, ~ n \geq 0. \]
We have $D_n a \in S^n_{\A}(L^{\vee})$. Note that $D_0 a = d_A a$. For any $\eta \in L^{\vee \bullet}$, set
  \[ (D_n \eta) (v_1, \ldots, v_{n+1}) := \sum_{i=1}^{n+1} (-1)^\theta \{ v_1, \ldots, \widehat{v_i}, \ldots, v_{n+1}|\eta(v_i) \}_{n+1} + (-1)^{|\eta|} \eta(\{ v_1, \ldots, v_{n+1} \}),  \]
where $\theta:= |\eta| ( |v_1| + \cdots + \widehat{|v_i|} + \cdots + |v_{n+1}|) + |v_i|(|v_{i+1}| + \cdots + |v_{n+1}|)$, $v_1, \ldots, v_{n+1} \in L^\bullet, ~ n \geq 0$, and a hat $~\widehat{\cdot}~$ stands for omission. We have $D_n \eta \in S^{n+1}(L^{\vee})$. Note that $D_0$ restricted on $L^{\vee \bullet}$ is the differential $d_{L^\vee}$ induced from $d_L$ on $\LD$. 

The unique extension of $D_n$ as a degree one derivation on $\ShatL$ satisfies the higher Chevalley-Eilenberg formula:
  \begin{equation}
    \begin{split}
       &(D_n \eta) (v_1, \ldots, v_{n+r})   \\
       := & \sum_{\sigma \in \Sh(n,r)} (-1)^{|\eta|(|v_{\sigma(1)}| + \cdots + |v_{\sigma(n)}|)} \alpha(\sigma, \mathbf{v}) \{ v_{\sigma(1)}, \ldots, v_{\sigma(n)} | \eta(v_{\sigma(n+1)}, \ldots, v_{\sigma(n+r)})  \}_{n+1} \\
          &- \sum_{\tau \in \Sh(n+1,r-1)} (-1)^{|\eta|} \alpha(\tau, \mathbf{v}) \eta(\{ v_{\tau(1)}, \ldots, v_{\tau(n+1)}\}_{n+1}, v_{\tau(n+2)}, \ldots, v_{\tau(n+r)} ), 
     \end{split}
  \end{equation}
for any $\eta \in S^r_{\A}(L,\A)$, $\mathbf{v}=(v_1, \ldots, v_{n+k}) \in (\LD)^{\times (n+r)}$. 

It is proved in \cite{Luca} that $\sum_{j+k=n} D_j D_k = 0$ for all $n \geq 0$. Hence we can define the degree $1$ derivation $D = \sum_{n \geq 0} D_n$ on $\ShatL$ and $D^2=0$.

Conversely, any degree $1$ derivation $D$ on $\ShatL$ can be written as $D = \sum_{n \geq 0} D_n$, where $D_n$ maps $S^{r}_{\A}(L,\A)$ to $S^{r+n}_{\A}(L,\A)$. For any $a \in \AD$ and $v_1, \ldots, v_n \in \LD$, set
  \begin{equation}\label{eq:CE2der} 
     \{  v_1, \ldots, v_{n-1} | a \}_n := (-1)^{|a|(|v_1|+ \cdots + |v_{n-1}|)} (D_{n-1} a) (v_1, \ldots, v_{n-1}) \in \AD 
  \end{equation}
and let $\{ v_1, \ldots, v_n \}_n$ be the unique element in $\LD$ satisfying
  \begin{equation}\label{eq:CE2bracket}
    \begin{split}
      \eta(\{ v_1, \ldots, v_n \}_n) :=    &   (-1)^{|\eta|} \sum_{i=1}^n \sum_{i=1}^n (-1)^{|v_i|(|v_1| + \cdots + |v_{i-1}|)} D_{n-1} (\eta(v_i)) (v_1, \ldots, \widehat{v_i}, \ldots, v_n) \\
       & - (-1)^{|\eta|} (D_{n-1} \eta)(v_1, \ldots, v_n), 
    \end{split}
  \end{equation}
for any $\eta \in L^{\vee \bullet}$ (here we use the projectivity and finiteness of $\LD$).
\end{proof}

\begin{definition}
The dga $(\ShatL, D)$ determined by an \Linfs-algebroid $(\LD, \AD)$ in Theorem \ref{thm:Koszul} is called the \emph{(completed) Chevalley-Eilenberg dga} of $\LD$. 
\end{definition}

\begin{remark}
If the dga structure $A=(\AD,d_A)$ is fixed in advance, an \Linfs-algebroid structure over $A$ on $\LD$ is equivalent to a derivation $D$ on $\ShatL$, whose zeroth component $D_0$ acts on $\AD$ as $d_A$ and acts on $L^{\vee \bullet}$ as $d_{L^\vee}$.
\end{remark}

\end{definition}

\subsection{\Linfs-algebroid of the formal neighborhood}  

We now repackage the differential $\Dnormal$ in the formula \eqref{eq:Dnormal_Final} in the language of \Linfs-algebroids. The underlying dga $A$ is the Dolbeault dga ($\AOD(X), \dbar$), hence the shifted DGLA $\A^{0, \bullet+1}_X(TX)$ of shifted Dolbeaut complex of the tangent bundle $TX$ equipped with the usual Lie bracket is a shifted dg-Lie subaglebra inside $\Der^\bullet(A)[1]$. The underlying $A$-module of the \Linfs-algebroid is the Dolbeault complex $(L^\bullet,d_L) = (\AOD_X (N), \dbar)$ of the normal bundle $N$ of $X$ inside $Y$. The structure maps are given as follows. First of all, we compute the values of the anchors on $S^r_{A}(L,A) = \A^{0,0}_X(S^r N)$, using \eqref{eq:Dnormal_zero}, \eqref{eq:der2anchor} and \eqref{eq:CE2der}, and get the recursive formulas,
\begin{gather}
  \alpha_1 = R^\top_1 = \beta:  \A^{0,0}_X(N) \to \A^{0,1}_X(TX) \subset \Der^1(A),   \\
  \alpha_n = R^\top_n + \sum_{\sigma \in \Sh(n-1,1)} \shape \circ (\alpha_{n-1} \times 1) \circ \sigma:   \A^{0,0}_X(N)^{\times n} \to \A^{0,1}_X(TX) \subset \Der^1(A), \quad n \geq 2,
\end{gather}
where $\sigma$ acts as permutation on the $n$ $\A^{0,0}_X(N)$-factors and $\shape : TX \otimes \normal \to TX$ is the shape operator.) Then we extend $\alpha_n$ to an $A$-multilinear map from $\AOD_{X}(N) ^{\times n}$ to  $\A^{0,\bullet+1}_X(TX)$ subject to the Koszul sign rules.

Similarly, by \eqref{eq:CE2bracket}, \eqref{eq:Dnormal_Final} and the formulas above for $\alpha$, we get the formulas for the $n$-ary brackets,
\begin{gather}
  \ell_n = R^\bot_n + \sum_{\sigma \in \Sh(n-1,1)} \normalconn \circ (\alpha_{n-1} \times 1) \circ \sigma: \A^{0,0}_X(N)^{\times n} \to \A^{0,1}_X(N), \quad n \geq 2,
\end{gather}
where the connection $\normalconn$ on the normal bundle is considered as a map
\begin{displaymath}
  \normalconn: \AOD_X(TX) \otimes_{\complex} \AOD_X(\normal) \to \AOD_X(\normal)
\end{displaymath}
of degree zero. Then we can extend the brackets uniquely such that it satisfies the condition $(1)$ of Definition \ref{defn:shLieAlgebroid}.

\bibliographystyle{amsalpha}
\addcontentsline{toc}{chapter}{Bibliography}
\bibliography{Dolbeault_dga_Bib}

\end{document}